\documentclass[english,12pt]{amsart}
\usepackage{amsmath,amssymb,amscd,amsfonts,mathrsfs,verbatim}
\usepackage[all]{xy}
\usepackage[mathcal]{euscript}
\usepackage{epsfig}
\usepackage{times}
\usepackage{MnSymbol}

\newcommand{\N}{{\mathbf N}}                   
\newcommand{\R}{{\mathbf R}}                   
\renewcommand{\H}{{\mathbf H}}                   
\newcommand{\C}{{\mathbf C}}                   
\newcommand{\CP}{\mathbf{P}^1}                
\newcommand{\E}{{\mathcal E}}                  
\newcommand{\PP}{{\mathbf P}}                    
\renewcommand{\O}{{\mathcal O}}                 
\newcommand{\Mod}{{\mathcal M}}               
\renewcommand{\d}{\mbox{d}}                      
\newcommand{\gl}{\mathfrak{gl}}                
\newcommand{\End}{{\mathcal E}nd}


\DeclareMathOperator{\res}{res}
\DeclareMathOperator{\Gl}{Gl}

\DeclareMathOperator{\Id}{Id}
\DeclareMathOperator{\coker}{coker}
\DeclareMathOperator{\im}{im}

\DeclareMathOperator{\rank}{rk}
\DeclareMathOperator{\Hilb}{Hilb}
\DeclareMathOperator{\Pic}{Pic}

\DeclareMathOperator{\ad}{ad}
\DeclareMathOperator{\red}{red}
\DeclareMathOperator{\rel}{rel}
\DeclareMathOperator{\Gal}{Gal}
\DeclareMathOperator{\depth}{depth}
\DeclareMathOperator{\Dol}{Dol}
\DeclareMathOperator{\irr}{irr}
\DeclareMathOperator{\GlobExt}{Ext}
\DeclareMathOperator{\tr}{tr}

\newtheorem{prop}{Proposition}[section]

\newtheorem{rk}[prop]{Remark}
\newtheorem{clm}[prop]{Claim}

\newtheorem{thm}[prop]{Theorem}

\newtheorem{construct}{Construction}
\title[The birational geometry of irregular Higgs bundles]
{The birational geometry of irregular Higgs bundles}
\author[Szil\'ard Szab\'o]{Szil\'ard Szab\'o \\ 
  Budapest University of Technology and Economics, \\
  Egry J. u. 1., 1111 Budapest, Hungary \\
  \texttt{szabosz@math.bme.hu}}

\date{\today}

\begin{document}

\begin{abstract}
We give a variant of the Beauville--Narasimhan--Ramanan correspondence 
for irregular parabolic Higgs bundles on smooth projective curves with 
semi-simple irregular part and show that it defines a Poisson isomorphism 
between certain irregular Dolbeault moduli spaces and relative Picard bundles 
of families of ruled surfaces over the curve. 
\end{abstract}

\maketitle

\section{Introduction}

The Beauville--Narasimhan--Ramanan (BNR) correspondence \cite{bnr} provides an equivalence of 
categories between (an open subset of) the category of twisted Higgs bundles $(\E,\theta)$ over a smooth projective curve 
and torsion-free sheaves $S$ of rank $1$ on finite covers of the curve contained in a ruled surface $Z$. 
The functor simply turns the action of the Higgs field into the action of multiplication by a 
variable algebraically independent from the function field of the curve; regularity of the Higgs field 
amounts to saying that we obtain the action of a commutative algebra. 


In the recent years, Higgs bundles with (possibly irregular) singularities have also been extensively studied from 
various perspectives \cite{Biq-Boa}, \cite{Boa1}, \cite{Moc}; 
however, to our knowledge the BNR-construction has not yet been fully worked out. 
As it was pointed out in \cite{A-Sz} by K. Aker and the author, for such a correspondence in the irregular parabolic case 
one needs to perform iterated blow-ups along non-reduced $0$-dimensional subschemes of the fibres of $Z$ over 
the irregular singular points. M. Kontsevich and Y. Soibelman sketch the idea in Section 8.3 of \cite{Kon-Soi} 
(however the role of the parabolic structure on the spectral data is not underlined there). 
Our aim in this paper is to fill out the details of this correspondence. 
In particular, our results imply part 1) of Conjecture 8.6.1 of \cite{Kon-Soi} in the semi-simple case. 
Specifically, we will show the following. 

\begin{thm}
  Given a smooth complex projective curve $C$, a finite set $p_1,\ldots, p_n\in C$ and multiplicities $m_i\in \N$ for 
  $1\leq i \leq n$, there exists a family of ruled surfaces 
$$
  \widetilde{\mathcal{Z}} \to \C^{r\sum_i (m_i+1)} 
$$
over $C$ such that there exists a Poisson isomorphism between dense open subsets of the moduli space of 
stable irregular Higgs bundles on C 
with semi-simple irregular parts at $p_i$ with pole of order $m_i+1$ and no other singularity on the one hand, 
and on the other hand the relative Picard bundle of certain torsion-free sheaves of rank $1$ over a 
relative Hilbert scheme of curves in $\widetilde{\mathcal{Z}}$. 
\end{thm}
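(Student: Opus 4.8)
The plan is to refine the classical spectral construction so that it records the entire polar part of the Higgs field, and to treat the choice of irregular type as the base of the family. I first fix the polar divisor $D=\sum_i(m_i+1)p_i$ and set $Z=\PP(K_C(D)\oplus\O_C)$, the ruled surface over $C$ carrying the tautological fibre coordinate $\eta$ and a section at infinity $\Sigma_\infty$. For a Higgs field $\t$ valued in $\End(\E)\otimes K_C(D)$ the characteristic polynomial $\det(\eta\cdot\Id-\t)$ cuts out a spectral curve $\widehat{C}_\t\subset Z$. The new phenomenon is that over each $p_i$ the $r$ local eigenvalue forms $\lambda_1(z)\,\d z,\dots,\lambda_r(z)\,\d z$ have poles of order $m_i+1$, so every branch of $\widehat{C}_\t$ runs into $\Sigma_\infty$ and the branches are mutually tangent there to high order; semi-simplicity of the irregular part says exactly that their polar parts are pairwise distinct. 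Each eigenvalue contributes a Laurent tail with $m_i+1$ coefficients, and recording these tails for all $r$ eigenvalues at all $n$ points produces precisely the affine space $\C^{r\sum_i(m_i+1)}$ that I take as the base.

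Next I would build $\widetilde{\mathcal{Z}}$ by iterated blow-ups carried out fibrewise over this base. Over $p_i$ one blows up at the point of $\Sigma_\infty$ determined by the current leading Laurent coefficient, then at the corresponding point of the new exceptional divisor determined by the next coefficient, and so on; the $k$-th blow-up separates exactly those branches whose first $k$ coefficients still agree, so after $m_i+1$ successive blow-ups the distinctness of the polar parts forces all $r$ branches of every spectral curve of the given irregular type to meet distinct terminal exceptional divisors transversally. Because the centers depend algebraically on the Laurent coefficients, performing this over $\C^{r\sum_i(m_i+1)}$ yields the family $\widetilde{\mathcal{Z}}\to\C^{r\sum_i(m_i+1)}$, and the proper transform of each spectral curve becomes a curve lying in a single fibre---for generic $\t$ a smooth one---hence a point of the relative Hilbert scheme.

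The correspondence then proceeds as in BNR but relative to the blow-up. In the forward direction the sheaf $\coker(\eta\cdot\Id-\t)$, supported on $\widehat{C}_\t$ and carried to its proper transform in $\widetilde{\mathcal{Z}}$, is a rank-$1$ torsion-free sheaf, i.e.\ a point of the relative Picard bundle; the parabolic weights at $p_i$ are matched against the intersection multiplicities of its support with the successive exceptional divisors, which is the datum left implicit in \cite{Kon-Soi}. In the reverse direction, a rank-$1$ torsion-free sheaf on a curve in a fibre of $\widetilde{\mathcal{Z}}$ is pushed down to $C$ after contracting the exceptional divisors, recovering $\E$, while multiplication by $\eta$ recovers $\t$, its irregular part being read off from the position of the support relative to the exceptional divisors. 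Restricting to the stable locus with smooth spectral curve on one side and to line bundles on the other exhibits these as mutually inverse morphisms between dense open subsets.

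The final and most delicate step is the Poisson assertion. The relative Hilbert scheme of a ruled surface carries a Poisson structure following Bottacin and Markman, induced by one on the surface whose degeneracy divisor is $\Sigma_\infty$ together with the fibres over $D$; since every blow-up center lies on this degeneracy divisor, the Poisson structure lifts to $\widetilde{\mathcal{Z}}$---the exceptional divisors are absorbed into the new degeneracy divisor---and descends to the relative Hilbert scheme and relative Picard bundle. On the Higgs side one has the standard Poisson structure from the cup product on the hypercohomology of $[\End(\E)\xrightarrow{[\t,\,\cdot\,]}\End(\E)\otimes K_C(D)]$. I would identify the two by computing each bracket in spectral coordinates and matching them through the residue pairing along the spectral curve, exactly as in the regular twisted case. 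The main obstacle is to control the parabolic data and the exceptional divisors in this comparison: one must show that the parabolic weights pair trivially in the residue computation and that the lifted Poisson structure restricts, on the locus of smooth spectral curves, to precisely the bracket dictated by the Higgs-side cup product, and this matching has to be verified uniformly as the spectral curve degenerates toward the boundary strata.
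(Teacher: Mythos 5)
Your overall strategy coincides with the paper's: the ruled surface $\PP_{C}(\O_C\oplus K_C(D)^{\vee})$, iterated blow-ups over the $p_i$ at points recording the successive Laurent coefficients of the eigenvalue forms, a BNR-type functor sending $(\E,\theta)$ to the cokernel sheaf on the proper transform of the spectral curve, and a comparison of the Bottacin/Donagi--Markman Poisson structure on the relative Picard bundle with the hypercohomology cup-product structure on the Dolbeault side. Two points, however, are genuinely off or incomplete.

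First, your description of the local geometry at $p_i$ contradicts your own choice of twist. Since $\theta$ is a section of $\End(\E)\otimes K_C(D)$ and the fibre coordinate is a section of $p^*K_C(D)\otimes\O_Z(1)$, the pole of order $m_i+1$ is absorbed into the twist: the spectral curve lies entirely in the affine chart $\xi\neq 0$ and meets the fibre over $p_i$ at the \emph{finite} points given by the leading Laurent coefficients $\zeta^{m_i}_{i,j}$ (this disjointness from the section at infinity is exactly condition (c) of Theorem \ref{thm:parabolicBNR}). So the branches do not ``run into $\Sigma_\infty$'' and the blow-up centers are not on $\Sigma_\infty$; they are the points $(p_i,\zeta^{m_i}_{i,j})$ and then points of the successive exceptional divisors, as your next sentence in fact describes. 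Relatedly, only $m_i$ blow-ups are performed over $p_i$, not $m_i+1$: the residue eigenvalues are \emph{not} blown up, but appear as the intersection points of the proper transform with the terminal exceptional divisors $E_{i,j,\ldots,j^{(m_i-1)}}$, and fixing these intersections is precisely what cuts out the symplectic leaves on the Picard side.

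Second, the step you yourself flag as the main obstacle --- showing that the parabolic data ``pairs trivially'' in the Poisson comparison --- is left unproven, and it is resolved in the paper not by a computation but by a choice of open set. One restricts to $\Mod_{\Dol}^{\irr,0}$, the locus where the spectral curve is smooth, connected and unramified over $D_{\red}$; this forces the residue restricted to each joint eigenspace of the irregular part to have one-dimensional eigenspaces, so the compatible parabolic filtration carries no information and simply disappears from the correspondence. With the parabolic data gone, the identification of the symplectic forms on the leaves reduces to the known regular/twisted case. Without this restriction (or an actual argument that the weights are Casimir directions for both brackets), your final step does not close; you should also record the degree shift $d=\delta+\tfrac{r(r-1)}{2}\deg L$ coming from $\pi_*\O_{\Sigma}=\O_C\oplus L^{-1}\oplus\cdots\oplus L^{1-r}$, which is needed to pin down which component of the relative Picard scheme is hit.
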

In concrete terms, the open subset of the moduli space of stable Higgs bundles appearing in the Theorem is 
characterised by the condition that the spectral curve be smooth. 
For a more precise statement of the result see Theorem \ref{thm:Poisson-isomorphism}. 
On the other hand, an equivalence of categories in the spirit of the above theorem for objects endowed with a 
compatible parabolic structure exists under a milder assumption on the spectral curve, namely integrality; 
see Theorem \ref{thm:parabolicBNR} for a precise statement of this correspondence. 

The paper is organized as follows. In Section \ref{sec:irregular-parabolic-Higgs} we set some notation 
concerning irregular Higgs bundles and parabolic sheaves. 
In Section \ref{sec:correspondence-regular} we give the correspondence in the case of Higgs bundles 
with regular singularities (Proposition \ref{prop:parabolicBNR-logarithmic}). 
Then we turn to the irregular case in Section \ref{sec:correspondence-irregular} and extend 
the correspondence to this case in Theorem \ref{thm:parabolicBNR}. 
Finally, in Section \ref{sec:Poisson} we spell out the natural Poisson structures 
on the relative Dolbeault moduli space and the relative Picard bundle and show that 
the correspondence is a Poisson isomorphism between dense open subsets (Theorem \ref{thm:Poisson-isomorphism}). 

\section{Irregular parabolic Higgs bundles and parabolic sheaves}\label{sec:irregular-parabolic-Higgs}

In this section we will introduce some basic terminology on the one hand concerning irregular Higgs bundles on curves
along the lines of P. Boalch's paper \cite{Boa-survey} and on the other hand concerning parabolic sheaves, 
and establish some elementary results about these objects. 
We end the section with a brief description of a ruled surface. 

\subsection{Irregular parabolic Higgs bundles}

Let $C$ be a smooth complex projective curve over $\C$ and $\E$ a holomorphic vector bundle of rank $r\geq 2$ over $C$. 
We denote by $\O_C, K_C$ the regular and canonical sheaves of $C$ respectively and by $\d$ the differential of meromorphic forms on $C$. 
Let us fix finitely many points $p_1,\ldots, p_n \in C$ and non-negative integers $m_1,\ldots , m_n\in \N$. 
For each $i$ we also fix a local coordinate $z_i$ of $C$ near $p_i$ and an irregular part 
\begin{equation}\label{eq:Qi}
  Q_i = A_i^{m_i} z_i^{-m_i} + \cdots + A_i^1 z_i^{-1}
\end{equation}
where $A_i^j\in \gl_r(\C)$ are semi-simple endomorphisms satisfying 
$$
  [A_i^j,A_i^{j'}] = 0. 
$$
In (\ref{eq:Qi}) the exponents of $z_i$ are meant to increase from left to right; 
in the particular case $m_i = 0$ the irregular part is $Q_i = 0$, and in this case 
we say that $\theta$ has regular or logarithmic singularities. We set 
$$
  D = (m_1 +1) \cdot p_1+ \cdots + (m_n +1) \cdot p_n
$$
as an effective divisor on $C$ with associated reduced divisor  
$$
  D_{\red} = p_1+ \cdots + p_n.
$$
Moreover we set 
\begin{equation}\label{eq:L}
 	L = K_{C} (D). 
\end{equation}
By (untwisted) irregular Higgs field with local form (\ref{eq:Qi}) we mean a section 
$$
  \theta \in H^0(C, \End (\E)\otimes_{\O_C} L)
$$
such that for any $i\in\{ 1,\ldots , n\}$ there exists a local trivialisation of $\E$ near $p_i$ with respect to which we have an expansion 
\begin{equation}\label{eq:theta-local}
  \theta = \left( \d Q_i + \Lambda_i z_i^{-1} + \mbox{ holomorphic terms} \right) \d z_i
\end{equation}
for some $\Lambda_i\in \gl_r(\C)$. A quasi-parabolic structure compatible with $\theta$ at $p_i$ is a filtration 
\begin{equation}\label{eq:parfiltr}
	\{ 0\} \subset F_i^{l_i-1} \subset \cdots \subset F_i^1 \subset F_i^0 = \E_{|p_i}
\end{equation}
of some length $1\leq l_i\leq r$ preserved by the matrix $\Lambda_i$ and the irregular part $Q_i$ (i.e. by all the matrices 
$A_i^1,\ldots , A_i^{m_i}$). 
\begin{clm}\label{clm:A-preserves-F}
 $Q_i$ preserves the filtration (\ref{eq:parfiltr}) if and only if for any $l$ the piece $F_0^l$ in 
  (\ref{eq:parfiltr}) is the direct sum of its intersections with the various simultaneous eigenspaces of 
  $A_i^1,\ldots , A_i^{m_i}$. 
\end{clm}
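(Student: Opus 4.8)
The plan is to reduce the claim to a purely linear-algebraic statement about the commuting semi-simple family $A_i^1, \ldots, A_i^{m_i}$ and then verify it in both directions. Because the powers $z_i^{-1}, \ldots, z_i^{-m_i}$ are linearly independent over $\C$, the irregular part $Q_i$ preserves the filtration (\ref{eq:parfiltr}) if and only if each coefficient $A_i^j$ preserves every piece $F_i^l$, which is exactly the convention recorded after (\ref{eq:parfiltr}). Since the $A_i^j$ are semi-simple and pairwise commute, they are simultaneously diagonalisable, so $V := \E_{|p_i}$ splits as $V = \bigoplus_\chi V_\chi$ into simultaneous eigenspaces $V_\chi = \{v \in V : A_i^j v = \lambda_j v \text{ for all } j\}$, indexed by the eigenvalue tuples $\chi = (\lambda_1, \ldots, \lambda_{m_i})$ that occur. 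In these terms the claim asserts that the family preserves the filtration if and only if $F_i^l = \bigoplus_\chi (F_i^l \cap V_\chi)$ for every $l$.

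The reverse implication is immediate: if $F_i^l = \bigoplus_\chi (F_i^l \cap V_\chi)$, then any $v \in F_i^l$ is a sum of vectors $v_\chi \in F_i^l \cap V_\chi$, and each $A_i^j$ sends $v_\chi$ to $\lambda_j v_\chi \in F_i^l$; hence $A_i^j v \in F_i^l$ and the piece is preserved.

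For the forward implication, the key point --- and really the only step carrying any content --- is that the projection $\pi_\chi : V \to V_\chi$ onto a single simultaneous eigenspace is a polynomial in the matrices $A_i^1, \ldots, A_i^{m_i}$. I would build it from the individual Lagrange spectral projectors $\pi_{j,\lambda_j} = \prod_{\mu \neq \lambda_j} (A_i^j - \mu\,\Id)/(\lambda_j - \mu)$ onto the $\lambda_j$-eigenspace of $A_i^j$, the product running over the distinct eigenvalues $\mu$ of $A_i^j$; each $\pi_{j,\lambda_j}$ is a polynomial in $A_i^j$ precisely because $A_i^j$ is semi-simple. As the $A_i^j$ commute, the composite $\pi_\chi = \prod_j \pi_{j,\lambda_j}$ is again a polynomial in the family, and it projects onto $\bigcap_j (\lambda_j\text{-eigenspace of } A_i^j) = V_\chi$ along the sum of the other $V_{\chi'}$. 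Granting this, assume every $A_i^j$ preserves $F_i^l$; then $F_i^l$ is stable under every polynomial in the $A_i^j$, in particular under each $\pi_\chi$. Writing $v = \sum_\chi \pi_\chi(v)$ for $v \in F_i^l$ shows that each component lies in $F_i^l \cap V_\chi$, giving $F_i^l \subseteq \bigoplus_\chi (F_i^l \cap V_\chi)$; the reverse inclusion is trivial. This establishes the splitting and hence the equivalence.
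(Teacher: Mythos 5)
Your proof is correct, but it takes a genuinely different route from the paper's. The paper argues the forward direction by hand: it takes a vector $\lambda_1 v_1 + \lambda_2 v_2 \in F_i^l$ with $v_1, v_2$ in distinct eigenspaces of a single $A = A_i^j$, applies $A$, subtracts $\zeta_1$ times the original vector, and concludes that $(\zeta_2-\zeta_1)\lambda_2 v_2$, hence $\lambda_2 v_2$, lies in $F_i^l$; it then asserts that ``the same kind of argument applies'' to vectors with components in more than two eigenspaces, and reduces the simultaneous case to the single-matrix case. Your argument instead constructs the projector $\pi_\chi$ onto each simultaneous eigenspace as a polynomial in the commuting family via Lagrange spectral projectors, and uses that an invariant subspace is stable under any polynomial in the preserving operators. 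The two approaches rest on the same underlying fact (simultaneous diagonalisability of a commuting semi-simple family), but yours handles an arbitrary number of eigencomponents and all the $A_i^j$ at once in a single uniform step, whereas the paper's peeling-off argument requires an implicit induction on the number of components that is only sketched. The price is that you invoke the slightly heavier machinery of spectral projectors; the paper's version is more elementary and self-contained. Both are complete and valid.
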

\begin{proof}
The direction $\Leftarrow$ is trivial. For the converse, we merely need to show it for a single $A_i^j$ 
that we denote by $A$ for ease of notation. 
If for instance 
$$
  \lambda_1 v_1 + \lambda_2 v_2 \in F_0^l
$$
for some $\lambda_j\neq 0$ and $v_j$ in the $\zeta_j$-eigenspace of $A$ with $\zeta_1\neq \zeta_2$ then 
$$
  A(\lambda_1 v_1 + \lambda_2 v_2) = \zeta_1(\lambda_1 v_1 + \lambda_2 v_2) + (\zeta_2 - \zeta_1)\lambda_2 v_2.
$$
Now as by assumption the left-hand side and the first term on the right-hand side belong to $F_0^l$, the 
same thing follows for the second term on the right-hand side, and thus (as $\zeta_2 - \zeta_1\neq 0$) for $\lambda_2 v_2$ too, 
which in turn implies the same thing for $\lambda_1 v_1$ as well. 
The same kind of argument applies for a vector with components in more than just two different eigenspaces.
\end{proof}
A compatible parabolic structure at $p_i$ is the datum of a compatible quasi-parabolic structure at $p_i$ 
and parabolic weights 
\begin{equation}\label{eq:parweights}
	1> \alpha_i^{l_i-1} > \cdots > \alpha_i^0 \geq 0. 
\end{equation}

\subsection{Parabolic sheaves}

It is convenient to recall the notion of an $\R$-parabolic sheaf on a complex manifold $X$ 
with divisor $D_{\red}$, a reduced effective divisor on $X$: 
this is a coherent sheaf $S$ with a decreasing filtration $S_{\bullet}$ indexed by $\R$ so that 
for all $\alpha\in\R$ 
\begin{enumerate}
 \item there exists some $\varepsilon >0$ with $S_{\alpha - \varepsilon} = S_{\alpha}$ and \label{R-par-sheaf-1}
 \item we have $S_{\alpha + 1} = S_{\alpha} \otimes \O_X(-D_{\red})$. \label{R-par-sheaf-2}
\end{enumerate}
\begin{clm} 
The categories of $\R$-parabolic locally free sheaves on a curve $C$ with divisor 
$$
  D_{\red} =  p_1 + \cdots + p_n
$$
and of parabolic bundles on $C$ with divisor $D_{\red}$ are isomorphic.
\end{clm}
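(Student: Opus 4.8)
The plan is to exhibit explicit functors in both directions and check that they are mutually inverse on objects and on morphisms, so that the two categories are literally isomorphic (and not merely equivalent).

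First I would build the functor $\Phi$ from $\R$-parabolic locally free sheaves to parabolic bundles. Given $S_\bullet$, declare the underlying bundle to be $\E := S_0$; since on a smooth curve every subsheaf of a locally free sheaf is again locally free, each $S_\alpha$ is automatically a subsheaf of $\E$, and by (\ref{R-par-sheaf-2}) one has $S_1 = \E \otimes \O_C(-D_{\red}) \subseteq S_\alpha \subseteq S_0 = \E$ for $\alpha \in [0,1]$. The key local observation is that away from $D_{\red}$ the line bundle $\O_C(-D_{\red})$ is trivial, so (\ref{R-par-sheaf-2}) forces $S_{\alpha+1} = S_\alpha$ locally; combined with left-continuity (\ref{R-par-sheaf-1}) and coherence this makes $S_\bullet$ locally constant off $D_{\red}$, so all jumps are concentrated on $D_{\red}$. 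As the torsion quotient $S_0/S_1 \cong \E_{|D_{\red}}$ has finite length $r\deg(D_{\red})$, there are only finitely many jump values in $[0,1)$, with at most $r$ distinct jumps over each $p_i$. Taking fibre images $F_i^j := \im\big((S_{\alpha_i^j})_{|p_i} \to \E_{|p_i}\big)$ at the successive jump values $\alpha_i^j \in [0,1)$ then yields a strictly decreasing flag of $\E_{|p_i}$ with $F_i^0 = \E_{|p_i}$, and recording the $\alpha_i^j$ as parabolic weights produces exactly the data (\ref{eq:parweights}) of a parabolic bundle.

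Conversely, I would define $\Psi$ by elementary modification: given $(\E, F_i^\bullet, \alpha_i^\bullet)$, set for $\alpha \in [0,1)$
$$ S_\alpha := \ker\Big( \E \longrightarrow \bigoplus_i \E_{|p_i} / F_i^{j_i(\alpha)} \Big), $$
where $F_i^{j_i(\alpha)}$ is the largest flag piece whose weight is at least $\alpha$, i.e. $j_i(\alpha) = \min\{ j : \alpha_i^j \geq \alpha\}$, and extend to all of $\R$ by imposing (\ref{R-par-sheaf-2}), that is $S_{\alpha+k} := S_\alpha \otimes \O_C(-k\,D_{\red})$. By construction $S_0 = \E$ (at $\alpha = 0$ one has $j_i = 0$ since $\alpha_i^0 \geq 0$, so no condition is imposed), the filtration is decreasing and left-continuous, and each $S_\alpha$ is a subsheaf of a locally free sheaf on a curve hence locally free; thus $\Psi$ lands in $\R$-parabolic locally free sheaves.

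The two constructions are visibly inverse: $\Phi\Psi$ recovers the flag $F_i^\bullet$ and weights $\alpha_i^\bullet$ because the jumps of the modified sheaf occur exactly at the prescribed weights and cut out exactly the prescribed fibre subspaces, while $\Psi\Phi$ reconstructs $S_\alpha$ from its own fibre images, which is precisely the elementary-modification correspondence between intermediate subsheaves $S_0(-D_{\red}) \subseteq S' \subseteq S_0$ and tuples of fibre subspaces $\{\im((S')_{|p_i} \to \E_{|p_i})\}_i$. On morphisms, a map of $\R$-parabolic sheaves is a sheaf morphism $\phi$ with $\phi(S_\alpha) \subseteq S'_\alpha$ for all $\alpha$; restricting to $\alpha = 0$ gives a bundle map $\E \to \E'$, and the inclusions for $\alpha \in (0,1)$ translate precisely into the requirement that $\phi_{p_i}$ carry each $F_i^j$ into the flag piece of $\E'_{|p_i}$ of weight at least $\alpha_i^j$, i.e. into a morphism of parabolic bundles; this correspondence is bijective and compatible with composition. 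I expect the main obstacle to be the bookkeeping in $\Phi$ — verifying that the jumps are finite, supported on $D_{\red}$, and yield a strict flag with weights in $[0,1)$ — together with checking that both round trips are literally the identity functor, as is needed for an isomorphism rather than a mere equivalence of categories.
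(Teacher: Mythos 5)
Your proof is correct and takes essentially the same route as the paper: from a parabolic bundle to an $\R$-parabolic sheaf via kernels of the evaluation-then-quotient maps $\E\to\E|_{p_i}/F_i^l$, and back via the fibres of $S_{\alpha}$ at the jump values (your image criterion coincides with the paper's ``every local section extending $v$ lies in $\E_{\alpha_i^l}$'' test precisely because $S_0(-D_{\red})\subseteq S_\alpha$ for $\alpha\in[0,1]$, so the two extensions differ by an element of $S_\alpha$). The additional bookkeeping you supply — jumps finite and concentrated on $D_{\red}$, the elementary-modification bijection on the round trips — is exactly what the paper compresses into ``clearly inverse to each other.''
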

\begin{proof} 
To a parabolic bundle $\E$ with filtration (\ref{eq:parfiltr}) associate the $\R$-parabolic sheaf 
$\E_{\bullet}$ defined as follows. Near the generic point $z\notin D_{\red}$ for every $\alpha\in\R$ we let $\E_{\alpha} = \E$. 
For $i\in\{ 1, \ldots, n \}$ and $\alpha_i^{l-1} < \alpha \leq \alpha_i^{l}$ we define $\E_{\alpha}$ 
in a small neighborhood of $p_i\in D$ not containing any other $p_{i'}$ as the kernel of the composition map 
$$
  \pi_i^l: \E \xrightarrow{eval_{p_i}} \E|_{p_i} = F_i^0 \to F_i^0/F_i^{l}. 
$$
For $\alpha\notin [0,1]$ we extend the definition by property (\ref{R-par-sheaf-2}) above. 

To an $\R$-parabolic sheaf we associate the vector bundle whose local sections are given by the sheaf $\E_0$, 
with filtration (\ref{eq:parfiltr}) defined as follows: for any vector $v\in \E_{|p_i}$ we let 
$v\in F_i^{l}$ if and only if any local section of $\E_0$ extending $v$ (i.e., whose specialization at 
$p_i$ is $v$) is actually a section of $\E_{\alpha_i^l}$.

These two constructions are clearly inverse to each other. 
\end{proof}

\subsection{A ruled surface}

Finally, let us introduce some notation concerning ruled surfaces. Let us denote by $Z$ the surface 
$$
	Z = \PP_{C}(\O_{C} \oplus L^{\vee}), 
$$
the fiberwise projectivisation of the total space of the line bundle $L^{\vee}$ dual to $L$. 
There exist a natural projection
$$
	p: Z \to C, 
$$
a relative hyperplane bundle $\O_Z(1)$ and canonical sections 
\begin{equation}\label{eq:canonical-sections}
 	\xi\in H^0(Z,\O_Z(1)), \zeta\in H^0(Z, p^*L\otimes \O_Z(1)).
\end{equation}
The divisors 
$$
  (\xi = 0), (\zeta = 0)
$$
are called the divisor at infinity and the $0$-divisor respectively. The line bundle $\O_Z(1)$ is trivial 
on the complement of the divisor at infinity, and the same holds for the $0$-divisor.

\section{The correspondence in the regular case}\label{sec:correspondence-regular}

Now let $(\E,\theta)$ be a Higgs bundle on a smooth projective curve $C$ and let us assume that all the singularities of 
$\theta$ are regular (i.e. $Q_i=0$ in (\ref{eq:Qi})). 
Then for all $i,l$ the restriction of $\theta$ induces a map 
$$
  \E_{\alpha_i^{l}} \to \E \otimes L,
$$
and since $\theta$ is assumed to be compatible with the parabolic structure, the composition 
$$
  \E_{\alpha_i^{l}} \to \E \otimes L \xrightarrow{\pi_i^l\otimes Id_L}  F_i^0/F_i^l \otimes L|_{p_i}
$$
is the $0$-morphism. We infer that the restriction of $\theta$ to $\E_{\alpha_i^{l}}$ factors through 
$\E_{\alpha_i^{l}} \otimes L$, and hence gives rise to maps
$$
  \theta_{\alpha}:\E_{\alpha} \to \E_{\alpha} \otimes L
$$
for every $\alpha$. To emphasize that we are dealing with the regular case set 
$$
  Z_{\red} = \PP_{C}(\O_{C} \oplus K_C(D_{\red})^{\vee}) 
$$
and for ease of notation let us denote by $p,\zeta,\xi$ the corresponding projection from $Z_{\red}$ to $C$ and canonical sections. 
It follows from what we have said above that we can define the spectral sheaves corresponding to the subsheaves $\E_{\alpha}$: 
\begin{equation}
 S_{\E_{\alpha}}  = \coker (p^*\E_{\alpha} \xrightarrow{\xi\otimes  p^*\theta+\zeta} p^*(\E_{\alpha}\otimes K_C(D_{\red}))\otimes \O_{Z_{\red}}(1)). \label{eq:spectral-sheaf-E-alpha}
\end{equation}
This is a coherent sheaf whose support is denoted 
\begin{equation}\label{eq:spectral-curve}
  \Sigma = (\det (\xi\otimes  p^*\theta + \zeta) )
\end{equation}
and called spectral curve. (In principle, $\Sigma$ could depend on $\alpha$, 
but the proof of the proposition below shows in particular that this is not the case.) 
\begin{prop}\label{prop:parabolicBNR-logarithmic}
\begin{enumerate}
 \item If $(\E,\theta)$ is a logarithmic Higgs bundle with compatible parabolic structure at $D_{\red}$ on a curve 
$C$ then the sheaves $S_{\E_{\alpha}}$ are pure of dimension $1$ and form an $\R$-parabolic sheaf on $Z_{\red}$ 
with divisor $p^{-1}(D_{\red})$. 
\item Conversely, given any $\R$-parabolic sheaf $S_{\alpha}$ with divisor $p^{-1}(D_{\red})$
and sheaves pure of dimension $1$ on $Z_{\red}$ with support disjoint from $\xi = 0$, then the sheaves 
$\E_{\alpha} = p_* S_{\alpha}\otimes K_C(D_{\red})^{\vee}$ 
form an $\R$-parabolic bundle on $C$ and $\theta$ given by 
\begin{equation}\label{eq:original-BNR}
 \theta = p_* (-\zeta \cdot \, : S_{\alpha} \otimes K_C(D_{\red})^{\vee} \to S_{\alpha})
\end{equation}
defines a Higgs field with regular singularities and compatible with the parabolic structure. 
\end{enumerate}
The two constructions are quasi-inverse to each other. 
\end{prop}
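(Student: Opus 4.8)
The plan is to run both directions through a single mechanism — the morphism $\Phi_\alpha = \xi\otimes p^*\theta+\zeta$ of (\ref{eq:spectral-sheaf-E-alpha}) and its cokernel — and then to check that the $\R$-parabolic axioms and the quasi-inverse property ride along functorially. For part (1), I would first observe that $\Phi_\alpha$ is a map between two locally free sheaves of the same rank $r$ on the integral surface $Z_{\red}$, namely $p^*\E_\alpha$ and $p^*(\E_\alpha\otimes K_C(D_{\red}))\otimes\O_{Z_{\red}}(1)$. Its determinant is the defining section of the spectral curve $\Sigma$ in (\ref{eq:spectral-curve}), which is not identically zero, so $\Phi_\alpha$ is injective as a sheaf map and $S_{\E_\alpha}=\coker\Phi_\alpha$ is set-theoretically supported on $\Sigma$. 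Purity of dimension $1$ is then a homological consequence of the two-term locally free resolution: at each point $x$ of $\Sigma$ the stalk of $S_{\E_\alpha}$ has projective dimension exactly $1$ over the two-dimensional regular local ring $\O_{Z_{\red},x}$, so by Auslander--Buchsbaum its depth equals $1$, and a module supported in dimension $\leq 1$ of positive depth has no zero-dimensional associated primes. The compatibility of $\theta$ with the parabolic structure — exactly what guarantees that $\theta$ restricts to $\theta_\alpha\colon\E_\alpha\to\E_\alpha\otimes L$, as in the discussion preceding (\ref{eq:spectral-sheaf-E-alpha}) — is what makes $\Phi_\alpha$ well defined on $p^*\E_\alpha$ for every $\alpha$.

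Next I would verify the two $\R$-parabolic axioms for $S_{\E_\bullet}$ on $Z_{\red}$ with divisor $p^{-1}(D_{\red})$, transporting the corresponding axioms for the $\R$-parabolic sheaf $\E_\bullet$ on $C$ through the functor $\E_\alpha\mapsto\coker\Phi_\alpha$. Left-continuity $S_{\alpha-\varepsilon}=S_\alpha$ is immediate from $\E_{\alpha-\varepsilon}=\E_\alpha$ and the naturality of the cokernel in the inclusions $\E_\alpha\hookrightarrow\E_{\alpha'}$. For periodicity I would use that $\Phi_{\alpha+1}$ is obtained from $\Phi_\alpha$ by tensoring with $p^*\O_C(-D_{\red})=\O_{Z_{\red}}(-p^{-1}(D_{\red}))$, the section $\xi\otimes p^*\theta+\zeta$ commuting with this pullback twist, whence $S_{\E_{\alpha+1}}=S_{\E_\alpha}\otimes\O_{Z_{\red}}(-p^{-1}(D_{\red}))$, which is axiom (\ref{R-par-sheaf-2}). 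The same bookkeeping shows $\Sigma$ is independent of $\alpha$: the section $\det\Phi_\alpha$ lives in a fixed line bundle and restricts on the generic fibre (where $\E_\alpha=\E$) to the characteristic polynomial of $\theta$, hence is the same section for every $\alpha$.

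For part (2), assume $S_\bullet$ is pure of dimension $1$ on $Z_{\red}$ with support disjoint from $(\xi=0)$. Because the support avoids the divisor at infinity it is a proper subscheme of $Z_{\red}\setminus(\xi=0)$ meeting each fibre in finitely many points, so $p$ restricted to it is finite, $R^1p_*S_\alpha=0$, and $p_*$ is exact on these sheaves. Purity then forces $p_*S_\alpha$ to be torsion-free on the smooth curve $C$, hence locally free, and twisting gives the bundle $\E_\alpha=p_*S_\alpha\otimes K_C(D_{\red})^\vee$. The two parabolic axioms for $\E_\bullet$ follow from those for $S_\bullet$: left-continuity by exactness of $p_*$, and periodicity by the projection formula $p_*(S_\alpha\otimes p^*\O_C(-D_{\red}))=(p_*S_\alpha)\otimes\O_C(-D_{\red})$. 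Finally, on $Z_{\red}\setminus(\xi=0)$ the bundle $\O_{Z_{\red}}(1)$ is trivial and $\zeta$ induces multiplication $S_\alpha\to S_\alpha\otimes p^*L$ preserving the index $\alpha$; pushing forward and untwisting by $K_C(D_{\red})^\vee$ produces $\theta\colon\E_\alpha\to\E_\alpha\otimes L$ with $L=K_C(D_{\red})$, which is logarithmic and, since it preserves each $\E_\alpha$, automatically compatible with the parabolic structure.

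It remains to see the two constructions are quasi-inverse. The round trip $(\E,\theta)\mapsto S_{\E_\bullet}\mapsto p_*S_{\E_\bullet}\otimes K_C(D_{\red})^\vee$ reduces to pushing the resolution (\ref{eq:spectral-sheaf-E-alpha}) forward along $p$, using $p_*\O_{Z_{\red}}=\O_C$, $R^1p_*\O_{Z_{\red}}=0$, and the standard identity $p_*\O_{Z_{\red}}(1)=\O_C\oplus K_C(D_{\red})^\vee$ to recover $p_*S_{\E_\alpha}\cong\E_\alpha\otimes K_C(D_{\red})$ together with $\theta$; here exactness of $p_*$ on sheaves supported off infinity keeps the computation clean. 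The other round trip $S_\bullet\mapsto(\E,\theta)\mapsto S_{\E_\bullet}$ is where I expect the main obstacle: one must reconstruct $S_\alpha$ from its pushforward $\E_\alpha$ together with the $\zeta$-action, i.e. identify $\coker\Phi_\alpha$ built from $(\E_\alpha,\theta)$ canonically with the original $S_\alpha$. This is precisely the classical Beauville--Narasimhan--Ramanan equivalence between rank-$1$ torsion-free sheaves on the spectral curve and Higgs pairs \cite{bnr}, applied fibrewise over $C\setminus D_{\red}$ and extended across $D_{\red}$; the purity hypothesis and disjointness from $(\xi=0)$ guarantee that the support of $S_\alpha$ equals $\Sigma$ and that $S_\alpha$ is recovered as a Cohen--Macaulay module over $\O_\Sigma$. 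The parabolic refinement adds nothing to this step, since every construction above respects the index $\alpha$ and both filtrations are determined by their non-parabolic members together with the shared axioms, so the natural isomorphisms assemble into an isomorphism of $\R$-parabolic sheaves.
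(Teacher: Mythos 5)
Your overall architecture matches the paper's, and several pieces are fine: injectivity of $\Phi_\alpha$ from the nonvanishing determinant, purity via the two-term locally free resolution and Auslander--Buchsbaum (the paper derives purity in the regular case instead from $p_*S_{\E_\alpha}=\E_\alpha\otimes L$ being locally free, but your argument is exactly the one it uses later in Claim \ref{clm:R-parabolic}, so the substitution is harmless), and the converse direction and the quasi-inverse step are handled essentially as in the paper, by reduction to \cite{bnr}.

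There is, however, one genuine gap in part (1): you never establish that the sheaves $S_{\E_\alpha}$ form a \emph{decreasing filtration} of a single sheaf, i.e.\ that for $\alpha<\beta$ the map $S_{\E_\beta}\to S_{\E_\alpha}$ induced by $\E_\beta\hookrightarrow\E_\alpha$ is injective. This is part of the definition of an $\R$-parabolic sheaf and cannot be dismissed by ``naturality of the cokernel'': $\coker$ is only right exact, so a monomorphism of two-term complexes need not induce a monomorphism of cokernels. This is precisely the point to which the paper devotes the central $3\times3$ diagram of its proof: writing $\C^{\alpha,\beta}=\E_\alpha/\E_\beta$ (a skyscraper supported on $D_{\red}$), one checks that $\xi\otimes p^*\theta+\zeta$ is injective on $p^*\C^{\alpha,\beta}$ (a pure $1$-dimensional sheaf on $p^{-1}(D_{\red})$), so the snake lemma yields a short exact sequence $0\to S_{\E_\beta}\to S_{\E_\alpha}\to cK^{\alpha,\beta}\to 0$ with $cK^{\alpha,\beta}$ of finite length supported on $\Sigma\cap p^{-1}(D_{\red})$; that is, $S_{\E_\alpha}$ is an upper elementary modification of $S_{\E_\beta}$. (Alternatively, the purity you already proved would let you argue directly: the map $S_{\E_\beta}\to S_{\E_\alpha}$ is generically an isomorphism, so its kernel is a $0$-dimensional subsheaf of a pure $1$-dimensional sheaf and hence vanishes.) Either way, the step needs an argument; as written, your proof omits the one nontrivial verification of part (1).
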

\begin{proof}
By \cite{bnr}, we have 
$$
  p_* S_{\E_{\alpha}}  = \E_{\alpha}\otimes L. 
$$
If the schematic support of $S_{\E_{\alpha}}$ had a $0$-dimensional embedded component, then the 
same would hold for $\E_{\alpha}\otimes L$; this however is impossible as $\E_{\alpha}$ is locally 
free, in particular torsion-free. 

For any $\alpha < \beta$ the quotient $\E_{\alpha}/\E_{\beta}$ is a skyscaper sheaf 
supported at $D_{\red}$; let us denote this sheaf by $\C^{\alpha,\beta}$. 
Now since $p^*\C^{\alpha,\beta}$ is pure of dimension $1$ supported on $p^{-1}(D_{\red})$ 
and the map $p^*\theta + \zeta$ is non-zero on this sheaf (unless it is itself the $0$-sheaf), 
its kernel must vanish and we have the diagram 
$$
	\xymatrix{
			& 0 & 0 & 0 & \\
		0 \ar[r] & p^*\C^{\alpha,\beta} \ar[r] \ar[u]& p^*(\C^{\alpha,\beta}\otimes L)\otimes\O_{Z_{\red}}(1) \ar[r] \ar[u] & 
												cK^{\alpha,\beta} \ar[r] \ar[u] & 0 \\
		0 \ar[r] & p^*\E_{\alpha} \ar[r]^{\xi\otimes p^*\theta + \zeta}\ar[u] & p^*(\E_{\alpha}\otimes L)\otimes \O_{Z_{\red}}(1) \ar[r]\ar[u] & S_{\E_{\alpha}}\ar[r] \ar[u] & 0 \\
		0 \ar[r] & p^*\E_{\beta}\ar[r]^{\xi\otimes p^*\theta + \zeta}\ar[u] & p^*(\E_{\beta}\otimes L)\otimes \O_{Z_{\red}}(1) \ar[r]\ar[u] & S_{\E_{\beta}}\ar[r] \ar[u] & 0 \\
					& 0 \ar[u] & 0 \ar[u] & 0 \ar[u] & 
	}
$$
which defines the cokernel sheaf $cK^{\alpha,\beta}$. The top row shows that $cK^{\alpha,\beta}$ is 
a torsion sheaf supported in dimension $0$ over 
$$
  \Sigma \cap p^{-1}(D_{\red})
$$
so $S_{\E_{\alpha}}$ is an upper elementary modification of $S_{\E_{\beta}}$ at these points. 
This shows that the sequence of sheaves $S_{\E_{\bullet}}$ is decreasing.  
Now for any $\alpha$ there exists $\varepsilon >0$ such that we have $\E_{\alpha - \varepsilon} = \E_{\alpha}$; 
it then follows that $S_{\E_{\alpha - \varepsilon}} = S_{\E_{\alpha}}$ too. 
Finally, as $\E_{\alpha +1} = \E_{\alpha}(-(z_0 + \cdots + z_n))$ the projection formula 
\begin{equation}\label{eq:projection-formula}
  p^*(\O_C(D_{\red})) = \O_{Z_{\red}}(p^{-1}(D_{\red})), 
\end{equation}
implies 
$$
  S_{\E_{\alpha +1}} = S_{\E_{\alpha}}(-p^{-1}(D_{\red})).
$$

For the converse direction, all the sheaves $\E_{\alpha}$ are locally free as torsion-free sheaves on 
smooth curves are locally free. 
As multiplication by $(- \zeta)$ preserves $S_{\alpha}$ it follows that $\theta$ preserves $\E_{\alpha}$, 
which in turn means that $\theta$ is compatible with the parabolic filtration. 
Finally, the fact that $\E_{\alpha}$ is $\R$-parabolic follows again from (\ref{eq:projection-formula}). 

The  fact that the two constructions are inverse to each other is straightforward \cite{bnr}: 
a $\pi_*\O_{\Sigma}$-module structure on $\E$ is the same thing as an $\O_C$-linear 
endomorphism $\theta$, and over the locus where the eigenvalues of $\theta$ are all distinct 
the $\pi_*\O_{\Sigma}$-module structure simply gives the decomposition of $\E$ into 
its eigenspaces for $\theta$. 
\end{proof}

\section{The correspondence in the irregular case}\label{sec:correspondence-irregular}

Proposition \ref{prop:parabolicBNR-logarithmic} gives a satisfactory result at the regular singularities; however, at the irregular 
singularities  the sheaves must possess a finer structure in order for such a correspondence to hold. 
This structure may be conveniently expressed using the technique of proper transform 
of $\R$-parabolic sheaves defined in Section 5 of \cite{A-Sz}. 
Let us now describe this construction. 
To fix our ideas, let us first treat the case $m_i=1$, and set $A_i = A_i^{1}$. 
Fix a local chart $z_i$ vanishing at $p_i$; this then gives rise to a local trivialization 
\begin{equation}\label{eq:2nd-order-lambda}
 \lambda_i = z_i^{-2}\d z_i 
\end{equation} 
of the line bundle (\ref{eq:L}). The assumption $m_i=1$ means that $\theta$ has local form 
\begin{equation}\label{eq:2nd-order-theta}
  \theta = (-A_i + z_i \Lambda_i + \cdots ) \lambda_i
\end{equation} 
where the dots stand for higher order terms in $z_i$. 
Let us introduce the local holomorphic function $\zeta_i$ on $Z\setminus (\xi )$ by 
\begin{equation}\label{eq:zeta-i}
  \zeta = \zeta_i p^* \lambda_i; 
\end{equation}
as the left hand side is a section of $p^* L$ and $p^*\lambda_i$ is a local section of $L$, 
$\zeta_i$ is then a local function. In what follows, we will drop $p^*$ from the notation. 
Let us denote the distinct eigenvalues of $-A_i$ by $\zeta_{i,1},\ldots,\zeta_{i,L_i}$, each repeated 
with a certain multiplicity $d_{i,1},\ldots, d_{i,L_i}$. 
It is known that the residue $\Lambda_i$ may be assumed up to a gauge transformation to be block diagonal with 
respect to the eigenspace-decomposition of $A_i$ (actually, the same thing holds for any prescribed 
number of the lower order terms). Let us denote the block of $\Lambda_i$ corresponding to the eigenvalue 
$\zeta_{i,l}$ of $-A_i$ by $\Lambda_{i,l}$. The spectral curve $\Sigma$ passes through the points 
$(p_i,\zeta_{i,1}),\ldots, (p_i,\zeta_{i,L_i})$ (where in the second coordinate $\zeta_{i,l}$ means the point 
$(\zeta_{i,l}:1)$ in the projective chart of the fiber $\E|_{p_i}$ given by the sections $\zeta,\xi$). 
Let us consider the formal splitting of $\E$ into the generalised eigenspaces for $\theta$ near $p_i$: 
\begin{equation}\label{eq:formal-decomposition}
  \E = \E^{i,1} \oplus \cdots \oplus \E^{i,r}; 
\end{equation}
this splitting holds over the field of formal Puiseux-series in $z_i$.
By Kneser's lemma, the eigenvalues $\zeta_{i,j}$ of $\theta$ are of the form 
\begin{equation}\label{eq:2nd-order-theta-eigenvalues}
   \left( \frac{\zeta_{i,j}^{2}}{z_i^{2}} + O\left( z_i^{-2+\frac 1r}\right) \right) \d z_i
\end{equation}
where for $1\leq j \leq r$ we denote by $\zeta_{i,j}$ the $j$'th eigenvalue of $-A_i$ in the compatible basis of (\ref{eq:2nd-order-theta}). 
Let us gather the generalised eigenspaces appearing in (\ref{eq:formal-decomposition}) corresponding to equal values of $\zeta_{i,j}$ 
to define a coarser decomposition 
\begin{equation}\label{eq:convergent-decomposition}
  \E = \tilde{\E}^{i,1} \oplus \cdots \oplus \tilde{\E}^{i,L_i}; 
\end{equation}
for example, 
$$
  \tilde{\E}^{i,1} = \bigoplus_j \E^{i,j} 
$$
the summation ranging for all values of $j$ such that $\zeta_{i,j}$ is equal to a fixed complex number, etc.  
\begin{clm}\label{clm:regular-splitting}
 The decomposition (\ref{eq:convergent-decomposition}) is defined over the field $\C\{ z_i \}[z_i^{-1}]$ 
 of convergent Laurent series in the variable $z_i$, and the summands are locally free $\C\{ z_i \}$-modules. 
 Furthermore, it is preserved by $\theta$.
\end{clm}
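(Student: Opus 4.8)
The plan is to realise the coarse decomposition (\ref{eq:convergent-decomposition}) as the image decomposition of a holomorphic family of spectral projections, thereby circumventing the Puiseux field altogether. Writing $\theta = M(z_i)\lambda_i$ with $M(z_i) = -A_i + z_i\Lambda_i + \cdots \in \gl_r(\C\{z_i\})$ the convergent matrix of $\theta$ in the trivialisation of (\ref{eq:2nd-order-theta}), so that $M(0) = -A_i$, the key structural fact is that $-A_i$ is semi-simple with distinct eigenvalues $\zeta_{i,1},\ldots,\zeta_{i,L_i}$. I would enclose these by pairwise disjoint small circles $\gamma_1,\ldots,\gamma_{L_i}\subset\C$, with $\gamma_l$ surrounding $\zeta_{i,l}$ and no other eigenvalue, and invoke continuity of the spectrum to find $\varepsilon>0$ such that for $|z_i|<\varepsilon$ no eigenvalue of $M(z_i)$ meets any $\gamma_l$ while exactly $d_{i,l}$ of them (counted with multiplicity) lie inside $\gamma_l$.

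Next I would define, for $|z_i|<\varepsilon$, the Riesz projections
$$
  P_l(z_i) = \frac{1}{2\pi i}\oint_{\gamma_l}(w\,\Id - M(z_i))^{-1}\,\d w,
$$
which depend holomorphically on $z_i$ since the resolvent is holomorphic on the contour, where $M(z_i)$ has no eigenvalue. Standard contour manipulations yield $P_l^2 = P_l$, $P_lP_{l'}=0$ for $l\neq l'$, $\sum_l P_l = \Id$, and $P_l M = M P_l$, with $\rank P_l\equiv d_{i,l}$ and $P_l(0)$ the spectral projection of $-A_i$ onto its $\zeta_{i,l}$-eigenspace. Setting $\tilde{\E}^{i,l} = \im P_l(z_i)$, each summand is the image of a holomorphic idempotent, hence a direct summand of the free module $\E\cong\C\{z_i\}^r$, and so projective — therefore, $\C\{z_i\}$ being a local ring, free of rank $d_{i,l}$; this gives the asserted local freeness. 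The commutation $P_lM = MP_l$ makes each summand $\theta$-invariant, and as all the $P_l$ are built from convergent data the whole decomposition is defined over $\C\{z_i\}$, a fortiori over $\C\{z_i\}[z_i^{-1}]$.

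The one point requiring genuine care — and the main obstacle — is to identify $\bigoplus_l\im P_l(z_i)$ with the grouping (\ref{eq:convergent-decomposition}) of the formal generalised eigenspaces $\E^{i,j}$, i.e.\ to check that the unramified spectral splitting recovers exactly the grouping of the possibly ramified fine eigenspaces by leading term. For this I would extend scalars to the Puiseux field and note that $\E^{i,j}$ is a generalised eigenspace of $M(z_i)$ for an eigenvalue $\zeta_{i,j}(z_i)$ whose value at $z_i=0$ is the eigenvalue $\zeta_{i,l}$ of $-A_i$ to which $\E^{i,j}$ is attached; by (\ref{eq:2nd-order-theta-eigenvalues}) this eigenvalue lies inside $\gamma_l$ for $|z_i|<\varepsilon$, so $\E^{i,j}\subseteq \im P_l(z_i)\otimes(\text{Puiseux})$. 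Summing over all $j$ with $\zeta_{i,j}=\zeta_{i,l}$ matches $d_{i,l}$ dimensions on each side, forcing equality, so (\ref{eq:convergent-decomposition}) is the base change to the Puiseux field of the convergent decomposition $\E=\bigoplus_l\im P_l(z_i)$. Hence the coarse decomposition descends to $\C\{z_i\}$, completing the proof; the essential input throughout is that distinctness of the eigenvalues of $-A_i$ is precisely what separates the clusters and licenses the descent from Puiseux series to convergent Laurent series.
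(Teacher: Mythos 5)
Your proof is correct, but it takes a genuinely different route from the paper's. The paper works field-theoretically: it passes to the splitting field $L$ of the characteristic polynomial of $\theta(\partial_{z_i})$ over $\C[[z_i]]$, observes that the Galois action of $\Gal(L|\C[[z_i]])$ permutes the fine generalised eigenspaces of (\ref{eq:formal-decomposition}) while preserving the leading term of the eigenvalue, so that each block of (\ref{eq:convergent-decomposition}) is Galois-invariant and hence descends to the Laurent field; convergence is then supplied by the remark that the spectral curve is the graph of a multi-valued analytic function, local freeness by torsion-freeness of $\E$, and $\theta$-invariance by a generic argument (the induced maps $\E^{i,l}\to\E^{i,l'}$ vanish generically, hence vanish). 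You instead build the decomposition analytically from the start via the Riesz projections $P_l(z_i)$ attached to the spectral clusters of $M(z_i)$ around the distinct eigenvalues of $-A_i$, and only afterwards match $\im P_l$ with the grouping of the formal generalised eigenspaces by a containment-plus-dimension-count over the Puiseux field. Your route buys several things at once: holomorphy of the projections immediately gives definedness over $\C\{z_i\}$ (slightly stronger than the claimed $\C\{z_i\}[z_i^{-1}]$), local freeness falls out because the image of an idempotent is a direct summand of a free module over the local ring $\C\{z_i\}$, and $\theta$-invariance is the identity $P_lM=MP_l$ rather than a separate generic-vanishing argument; you also correctly isolate the one genuinely delicate step, namely the comparison with the formal grouping. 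What the paper's Galois argument buys in exchange is flexibility: it applies verbatim to the finer grouping by the whole polar part "up to order $z_i^{-2}$" used in the general $m_i>1$ case, whereas the contour-integral argument separates clusters only by the leading coefficient and must be iterated through the blow-ups of Construction \ref{constr:blowup} (which is, admittedly, exactly the recursion the paper performs anyway). One small point you could make fully explicit: the identification $\im(P_l\otimes F)=\bigoplus_{j}\E^{i,j}$ over the Puiseux field $F$ is cleanest if you note that the characteristic polynomial of $M$ restricted to the free summand $\im P_l$ has coefficients in $\C\{z_i\}$ and roots exactly the branches $\nu_j$ with leading term $\zeta_{i,l}$, so that the generalised eigenspaces for those $\nu_j$ lie inside $\im(P_l\otimes F)$; this replaces the pointwise "lies inside $\gamma_l$" statement by an algebraic one and closes the argument.
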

\begin{proof}
 Let $L$ be the splitting field of the characteristic polynomial 
 $$
   \chi_{\theta(\partial_{z_i})}(\zeta,z_i) = \det(\theta (\partial_{z_i}) + \zeta )
 $$
 of $\theta(\partial_{z_i})$ over the ring of formal power series $\C[[z_i]]$. 
 Then the action of $\Gal(L|\C[[z_i]])$ maps a generalised eigenvector with Puiseux-expansion having highest-order term $\zeta_{i,j}$ 
 to a generalised eigenvector with equal highest-order term. 
 Therefore, the direct sum of such generalised eigenspaces is $\Gal(L|\C[[z_i]])$-invariant. 
 On the other hand, the solution curve of $\chi_{\theta(\partial_{z_i})}$ is the graph of a multi-valued analytic function in $z_i$, 
 hence the generalised eigenvectors are given by convergent Puiseux-series. As $\E$ is torsion-free, it may only have torsion-free submodules. 
 This proves the first two statements. 
 
 As for the third statement, notice that as the factors $\E^{i,l}$ are direct sums of generalised $\theta$-eigenspaces, 
 they are preserved by $\theta$ over the generic point. So for any $l\neq l'$ the map 
 $$
  \E^{i,l} \to \E^{i,l'}
 $$
 induced by $\theta$ is generically $0$. We conclude by the second statement. 
\end{proof}

To simplify notation, from now on we will write $\E^{i,l}$ for $\tilde{\E}^{i,l}$. 
Let $\theta^{i,l}$ stand for the restriction of $\theta$ to $\E^{i,l}$. 
The claim implies that we have a local direct sum decomposition
$$
  (\E,\theta ) = (\E^{i,1},\theta^{i,1}) \oplus \cdots \oplus (\E^{i,L_i}, \theta^{i,L_i})
$$
with $\E^{i,l}|_{p_i}$ equal to the $\zeta_{i,l}$-eigenspace of $A$. 
Furthermore, by Claim \ref{clm:A-preserves-F}, the above splitting is also compatible with the parabolic structure 
in the sense that for any $\alpha$ we also have 
$$
  \E_{\alpha} = \E_{\alpha}^{i,1} \oplus \cdots \oplus \E_{\alpha}^{i,L_i}, 
$$
where 
$$
  \E_{\alpha}^{i,l} = \E_{\alpha} \cap \E^{i,l}. 
$$
Define the affine curves $\Sigma^{i,l}$ by the vanishing of the sections
$$
  \det(\zeta \mbox{I}_{\E^{i,l}} + \xi \theta^{i,l}) 
$$
where $\mbox{I}_{\E^{i,l}}$ stands for the identity transformation of $\E^{i,l}$. 
Let us now blow up $Z$ at the points $(p_i,\zeta_{i,l})$ for $l\in \{1,\ldots ,L_i\}$: 
\begin{equation}\label{eq:blow-up-i}
  \sigma_i: Z^i \longrightarrow Z. 
\end{equation}
The exceptional divisor of $\sigma_i$ corresponding to $(p_i,\zeta_{i,l})$ is denoted $E_{i,l}$. 
Specifically, introduce homogeneous coordinates 
\begin{equation}
  (z_{i,l}':\zeta_{i,l}') \in E_{i,l} = \CP; 
\end{equation}
and let $U_{i,l}$ denote a neighborhood of $(p_i,\zeta_{i,l})$ containing no other point $(p_{i'},\zeta_{i',l'})$ for $i'\neq i, l'\neq l$. 
Then the preimage by $\sigma_i$ of $U_{i,l}$ is given by 
\begin{equation}\label{eq:blow-up-coordinates}
  \sigma_i^{-1}(U_{i,l}) =\{ z_i \zeta_{i,l}' = z_{i,l}'(\zeta_i - \zeta_{i,l}) \} \subset U_{i,l} \times \CP. 
\end{equation}
Let us denote by 
\begin{equation}\label{eq:Uil'}
  U_{i,l}'\subset \sigma_i^{-1}(U_{i,l})
\end{equation}
the affine subset defined by $z_{i,l}'\neq 0$. On $U_{i,l}'$ we then have 
$$
  \zeta_i - \zeta_{i,l} = z_i \frac{\zeta_{i,l}'}{z_{i,l}'}; 
$$
in particular, on $U_{i,l}'$ the equality $z_i=0$ implies $\zeta_i = \zeta_{i,l}$. 
Here $\zeta_{i,l}',z_{i,l}'$ are to be understood as sections of $\O_{Z^i}(-E_{i,l})$, since 
$$
  \O_{Z^i}(-E_{i,l})|_{E_{i,l}}\cong \O_{E_{i,l}}(1).
$$
The equation of the total transform $\sigma_i^{-1}(\Sigma)$ of the spectral curve in the above affine chart therefore writes 
\begin{equation*}
  \det(\zeta \mbox{I}_{\E^{i,l}} - \theta^{i,l}) =  
  \det \left( z_i \frac{\zeta_{i,l}'}{z_{i,l}'} \lambda_i \mbox{I}_{\E^{i,l}}  - \tilde{\theta}^{i,l} \right) 
\end{equation*}
with 
\begin{equation}\label{eq:theta-tilde}
 \tilde{\theta}^{i,l} = \theta^{i,l} - \zeta_{i,l}\lambda_i  \mbox{I}_{\E^{i,l}} = (B_{i,l} + O(z_i)) \frac{\d z_i}{z_i} = (B_{i,l} + O(z_i)) z_i \lambda_i
\end{equation}
as $z_i\to 0$. 
We now resolve the quotient in the determinant above by writing  
\begin{equation}\label{eq:total-transformed-spectral-curve}
  \det(\zeta \mbox{I}_{\E^{i,l}} - \theta^{i,l}) =  
  \left( \frac 1{z_{i,l}'} \right)^{d_{i,l}} \det \left( z_i \zeta_{i,l}' \lambda_i \mbox{I}_{\E^{i,l}}  - {z_{i,l}'} \tilde{\theta}^{i,l} \right).
\end{equation}
This is therefore the defining relation of the total transform $\sigma_i^{-1}(\Sigma)$; observe now that if 
$$
  z_i = 0 = \zeta_i - \zeta_{i,l}
$$
then the matrix in the argument of the right hand side of (\ref{eq:total-transformed-spectral-curve}) vanishes for any $(z_{i,l}':\zeta_{i,l}')$. 
Said differently, $E_{i,l}$ is a component of multiplicity $d_{i,l}$ in $\sigma_i^{-1}(\Sigma)$. 
The proper transform $\Sigma^i$ of $\Sigma$ with respect to $\sigma_i$ is therefore given in the chart $U_{i,l}'$ by the formula 
\begin{equation}\label{eq:proper-transformed-spectral-curve}
  \det(\zeta_{i,l}'  \lambda_i \mbox{I}_{\E^{i,l}} -  {z_{i,l}'}z_i^{-1}\tilde{\theta}^{i,l}) \in H^0(U_{i,l}', (p\circ \sigma_i )^*(L\otimes \End(\E^{i,l})) \otimes \O_{Z^i}(-E_{i,l}))
\end{equation}
(compare with (\ref{eq:canonical-sections})). Notice that here 
$$
  z_i^{-1}\tilde{\theta}^{i,l} = (B_{i,l} + O(z_i)) \lambda_i
$$
is a regular section of $(p \circ \sigma_i )^*(L\otimes \End(\E^{i,l}))$. 
In particular, we see that (\ref{eq:proper-transformed-spectral-curve}) has precisely the same form as the characteristic equation 
(\ref{eq:spectral-curve}) of a logarithmic Higgs field with residue $B_{i,l}$ with respect to the auxiliary 
variable $(\zeta_{i,l}'\lambda_i : z_{i,l}')$ instead of $(\zeta:\xi)$. 
Just as in the regular case, from the assumption that $B_{i,l}$ preserves the parabolic filtration we deduce that $z_i^{-1}\tilde{\theta}^{i,l}$ restricts to maps 
\begin{equation*}
  z_i^{-1}\tilde{\theta}_{\alpha}^{i,l}:(p \circ\sigma_i)^* \E_{\alpha}^{i,l} \rightarrow (p\circ\sigma_i)^*(\E_{\alpha}^{i,l} \otimes L) 
\end{equation*}
of locally free sheaves over $U_{i,l}'$; or equivalently, maps 
\begin{equation}\label{eq:second-order-parabolic-map}
  z_i^{-1}\tilde{\theta}_{\alpha}^{i,l}:(p \circ\sigma_i)^* (\E_{\alpha}^{i,l} \otimes L^{\vee}) \rightarrow (p\circ\sigma_i)^*\E_{\alpha}^{i,l}.
\end{equation}
Now the proper transform $\Sigma^i$ given by equation (\ref{eq:proper-transformed-spectral-curve}) 
has an obvious refinement in terms of the spectral sheaves $S_{\E_{\alpha}}$ too. 
Namely, analogously to (\ref{eq:spectral-sheaf-E-alpha}) we may define coherent sheaves on $Z^i$ by the formulae 
\begin{equation}\label{eq:parabolic-spectral-sheaf}
   S_{\E_{\alpha}}^{i} = \coker(\zeta_{i,l}' \lambda_i \mbox{I}_{(\E_{\alpha}^{i,l}\otimes L^{\vee})} -  {z_{i,l}'}z_i^{-1}\tilde{\theta}_{\alpha}^{i,l}) 
\end{equation}
locally on the affine charts $U_{i,l}'$ and as $S_{\E_{\alpha}}$ away from these charts. 
The support of $S_{\E_{\alpha}}^{i}$ is then obviously equal to $\Sigma^i$. 
\begin{clm}\label{clm:R-parabolic}
 Let $z_i\in U\subset C$ be an open set such that $z_{i'}\notin U$ for $i'\neq i$. 
 Then on the open subset $(p \circ\sigma_i)^{-1} U \subset Z^i$ we have 
 $$
  (p \circ\sigma_i)_* S_{\E_{\alpha}}^{i} = \E_{\alpha}.
 $$
 The sheaves $S_{\E_{\bullet}}^{i}$ are pure and define an $\R$-parabolic sheaf with divisor 
\begin{equation}\label{eq:parabolic-divisor}
   (p \circ \sigma_i)^{-1} (p_i). 
\end{equation}
\end{clm}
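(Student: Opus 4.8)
The plan is to reduce every assertion to the logarithmic case established in Proposition \ref{prop:parabolicBNR-logarithmic}, applied chart by chart on the blow-up $Z^i$. The decisive input is the observation recorded just before the claim: on each affine chart $U_{i,l}'$ the defining map of $S_{\E_{\alpha}}^{i}$ in (\ref{eq:parabolic-spectral-sheaf}) is, after dividing by the unit $z_{i,l}'$ and trivialising $L$ by $\lambda_i$, nothing but the spectral map $w\,\mbox{I} - M$ of a \emph{logarithmic} Higgs bundle, where $w = \zeta_{i,l}'/z_{i,l}'$ is the genuine fibre coordinate satisfying $\zeta_i = \zeta_{i,l} + z_i w$ and $M = z_i^{-1}\tilde{\theta}^{i,l}/\lambda_i = B_{i,l} + O(z_i)$ is holomorphic with residue $B_{i,l}$. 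Thus on $U_{i,l}'$ the sheaf $S_{\E_{\alpha}}^{i}$ is literally a regular spectral sheaf for $(\E^{i,l}, z_i^{-1}\tilde{\theta}^{i,l})$, with the pair $(\zeta_{i,l}'\lambda_i, z_{i,l}')$ in the role of $(\zeta,\xi)$.

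I would establish the pushforward first. The companion-matrix computation of \cite{bnr} gives $(p\circ\sigma_i)_* \coker(w\,\mbox{I} - M) = \E_{\alpha}^{i,l}$ canonically, with $w$ acting as $M$; crucially, since $w$ is here a function rather than an $L$-valued section, no twist by $L$ survives --- this is exactly what the blow-up achieves. Summing over $l$ and using the parabolic-compatible splitting $\E_{\alpha} = \bigoplus_l \E_{\alpha}^{i,l}$ furnished by Claim \ref{clm:A-preserves-F} and Claim \ref{clm:regular-splitting} recovers $\E_{\alpha}$ near the exceptional locus. Away from the divisors $E_{i,l}$ the map $\sigma_i$ is an isomorphism and $S_{\E_{\alpha}}^{i}$ coincides with $S_{\E_{\alpha}}$; one checks these two descriptions agree on $U_{i,l}'\setminus E_{i,l}$ under $\zeta_i = \zeta_{i,l} + z_i w$, so the local computations patch to $(p\circ\sigma_i)_* S_{\E_{\alpha}}^{i} = \E_{\alpha}$ over $U$, the identification using the trivialisation of $L$ by $\lambda_i$ available on the neighbourhood $U$ of $p_i$.

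Purity and the $\R$-parabolic axioms then follow as in the regular case. Purity: $S_{\E_{\alpha}}^{i}$ is the cokernel of an injective morphism of locally free sheaves, so it is supported on $\Sigma^i$; as $(p\circ\sigma_i)|_{\Sigma^i}$ is finite, any $0$-dimensional embedded component would push forward to a nonzero torsion subsheaf of $\E_{\alpha}$, which is absurd since $\E_{\alpha}$ is locally free. The filtration is decreasing because $\E_{\beta}^{i,l}\subset\E_{\alpha}^{i,l}$ for $\alpha<\beta$ induces, via the snake lemma applied to the two cokernel sequences, an inclusion $S_{\E_{\beta}}^{i}\hookrightarrow S_{\E_{\alpha}}^{i}$ with $0$-dimensional cokernel over $p_i$ (an upper elementary modification), and left-continuity is inherited from that of $\E_{\bullet}$. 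For periodicity, the relation $\E_{\alpha+1} = \E_{\alpha}(-D_{\red})$ reads locally as $\E_{\alpha}^{i,l}(-p_i)$, and $\O$-linearity of (\ref{eq:parabolic-spectral-sheaf}) together with the projection formula $(p\circ\sigma_i)^*\O_C(-p_i) = \O_{Z^i}(-(p\circ\sigma_i)^{-1}(p_i))$ yields $S_{\E_{\alpha+1}}^{i} = S_{\E_{\alpha}}^{i}(-(p\circ\sigma_i)^{-1}(p_i))$; here the divisor (\ref{eq:parabolic-divisor}) is the \emph{total} transform of the fibre, each $E_{i,l}$ occurring with multiplicity one since the blow-up centres are smooth points of $p^{-1}(p_i)$.

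The step I expect to be the main obstacle is the twist bookkeeping in the second paragraph: making the gluing of the untwisted chart description $\coker(w\,\mbox{I} - M)$ with the sheaf $S_{\E_{\alpha}}$ away from the exceptional locus precise, and confirming that the $L$-twist present in Proposition \ref{prop:parabolicBNR-logarithmic} is genuinely absorbed by the blow-up, so that the pushforward equals $\E_{\alpha}$ over $U$ rather than $\E_{\alpha}\otimes L$. Once this identification is secured, purity and the three $\R$-parabolic properties are formal consequences of the logarithmic case, exactly paralleling the proof of Proposition \ref{prop:parabolicBNR-logarithmic}.
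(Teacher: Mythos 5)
Your argument is correct and follows the same overall strategy as the paper's proof: work chart by chart on $U_{i,l}'$, recognise (\ref{eq:parabolic-spectral-sheaf}) as a logarithmic spectral sheaf for $(\E^{i,l}_{\alpha}, z_i^{-1}\tilde{\theta}^{i,l})$, push forward summand by summand using the splittings of Claims \ref{clm:A-preserves-F} and \ref{clm:regular-splitting}, and deduce the $\R$-parabolic axioms from the projection formula. The one genuine difference is the treatment of purity and the resulting order of the argument. The paper proves purity \emph{first} and independently of the pushforward, via the Auslander--Buchsbaum formula on the smooth surface $Z^i$ (projective dimension $1$ at each point of the support forces $\depth=1$, hence no $0$-dimensional embedded component); it then \emph{uses} purity to see that $(p\circ\sigma_i)_*S^i_{\E_\alpha}$ is torsion-free, which is the key input for upgrading the evident epimorphism $\iota:\oplus_l\E^{i,l}_{\alpha}\to(p\circ\sigma_i)_*S^i_{\E_\alpha}$ (coming from the presheaf description of the cokernel) to an isomorphism by a generic eigensection computation --- a step that also invokes reducedness of $\Sigma^i$ to know the cover is generically $r$-to-$1$. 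You invert the order: pushforward first by citing the BNR companion-matrix computation, then purity by pushing a putative embedded component into the locally free $\E_\alpha$ (the argument the paper itself uses in Proposition \ref{prop:parabolicBNR-logarithmic}). That route is also valid, but it concentrates all the weight on the step you yourself flag as the obstacle: on the affine chart the fibres of $p\circ\sigma_i$ are no longer projective, so the $R^1p_*$-vanishing underlying the BNR direct-image computation is not available verbatim, and the honest substitute is precisely the paper's epi/mono argument for $\iota$ --- which reintroduces purity as a prerequisite. So either keep your order but prove the pushforward identity by hand on the chart, or adopt the paper's Auslander--Buchsbaum shortcut to get purity up front.
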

\begin{proof}
 Let us first show purity: this is a simple consequence of the Auslander--Buchsbaum formula \cite{Huy-Lehn} pp. 4--5: using that 
 $Z^i$ is regular of dimension $2$ at any point $x\in \Sigma^i$ and the projective dimension of $(S_{\E_{\alpha}}^{i})_x$ 
 is by definition equal to $1$ we get that $\depth((S_{\E_{\alpha}}^i)_x)=1$, so its schematic support does not 
 have a $0$-dimensional embedded component. 
 
 By definition for any $V\subseteq U$ we have 
 $$
  (p \circ\sigma_i)_* S_{\E_{\alpha}}^{i} (V) =  S_{\E_{\alpha}}^{i} ((p \circ\sigma_i)^{-1}(V)).
 $$
 As the support of $S_{\E_{\alpha}}^{i}$ is contained in the disjoint union of the open sets (\ref{eq:Uil'})for $l\in\{ 1,\ldots , L_i \}$, 
 the right-hand side is a direct sum of the Abelian groups 
 $$
  S_{\E_{\alpha}}^{i} ((p \circ\sigma_i)^{-1}(V)\cap U_{i,l}'). 
 $$
 This latter is in turn obtained by the sheafification of the pre-sheaf 
 $$
  ((p \circ\sigma_i)^*\E_{\alpha}^{i,l} / \im(\zeta_{i,l}' \lambda_i \mbox{I}_{(\E_{\alpha}^{i,l}\otimes L^{\vee})} -  
  {z_{i,l}'}z_i^{-1}\tilde{\theta}_{\alpha}^{i,l}))((p \circ\sigma_i)^{-1}(V)\cap U_{i,l}'). 
 $$
 Mapping a section of $\E_{\alpha}^{i,l}$ on $V$ to the class of its pull-back thus gives us a canonical epimorphism of sheaves of vector-spaces 
 $$
  \iota : \oplus_{l=1}^{L_i} \E_{\alpha}^{i,l} \to (p \circ\sigma_i)_* S_{\E_{\alpha}}^{i}. 
 $$
 We now show that it is a monomorphism. Notice that both the source and target of $\iota$ are torsion-free sheaves 
 on  $C$: for $\E_{\alpha}^{i,l}$ this holds as it is a subsheaf of the locally free sheaf $\E$, and for 
 $(p \circ\sigma_i)_* S_{\E_{\alpha}}^{i}$ it follows from purity of $S_{\E_{\alpha}}^{i}$ since the 
 support $\Sigma^i$ of this latter sheaf is finite over $C$. 
 Therefore, it is sufficient to show that $\iota$ is generically a monomorphism. 
 The support of $S_{\E_{\alpha}}^{i}$ is equal to $\Sigma^i \cap (p \circ\sigma_i)^{-1}(U)$; 
 in particular by the assumption that $\Sigma^i$ is reduced, it is $r$ to $1$ over $U$. 
 On the open subset $U^0\subseteq U$ over which the $r$ branches of $\Sigma^i$ are distinct the map $\iota$ 
 corresponds to writing sections of $\E_{\alpha}^{i,l}$ as $\O_U$-linear 
 combinations of elements of a basis given by eigensections of $\theta$.  
 As such a linear combination determines the section uniquely, we get the injectivity of $\iota$ over $U^0$. 
 
 The statement that $S_{\E_{\alpha}}^{i}$ form an $\R$-parabolic sheaf follows easily from the projection formula 
 $$
  \O_{(p \circ\sigma_i)^{-1}(U)}((p \circ\sigma_i)^{-1}(p_i)) = (p \circ\sigma_i)^*(\O_U(p_i)). 
 $$
\end{proof}

We are now going to extend the above construction recursively to the case of arbitrarily high order poles $p_i$. 
Denote by $-\zeta_{i,j}^{m}/m$ the $j$'th eigenvalue of the matrix $A_i^m$ appearing in (\ref{eq:Qi}). 
Again by Kneser's lemma, the eigenvalues $\zeta_{i,j}$ of $\theta$ are of the form 
\begin{equation}\label{eq:theta-eigenvalues}
   \left( \frac{\zeta_{i,j}^{m_i}}{z_i^{m_i+1}} + \cdots + \frac{\zeta_{i,j}^{1}}{z_i^2} + O \left( z_i^{-2+\frac 1r}\right) \right) \d z_i
\end{equation}
near $p_i$. Correspondingly, there exists a local splitting (\ref{eq:convergent-decomposition}) of $\E$ where each direct summand 
consists of the direct sum of all eigenspaces of $\theta$ having the same expansion up to order $z_i^{-2}$. 
Just as in Claim \ref{clm:regular-splitting}, this decomposition is actually defined over $\C(\{ z_i\})$. 
We now construct a birational map 
\begin{equation}\label{eq:master-surface}
  \tilde{\sigma}: \widetilde{Z} \to Z 
\end{equation}
recursively. 
If $m_i>1$ then apply a blow-up 
$$
  \sigma_{i,j} :Z^{i,j} \longrightarrow Z 
$$
at each point $(p_i, \zeta_{i,j}^{m_i})\in Z$ with respect to the local trivialisation 
\begin{equation}\label{eq:general-lambda}
 \lambda_i = z_i^{-m_i-1}\d z_i 
\end{equation} 
of $L$ near $p_i$. For each eigenvalue $\zeta_{i,j}^{m_i}$ the point $(p_i, \zeta_{i,j}^{m_i})\in Z$ is 
only taken once, independently of its multiplicity as an eigenvalue. 
In concrete terms, using the holomorphic coordinate (\ref{eq:zeta-i}) introduce new homogeneous coordinates 
\begin{equation}
  (z_{i,j}':\zeta_{i,j}') \in E_{i,j} = \CP; 
\end{equation}
and let $U_{i,j}$ denote a neighborhood of $(p_i,\zeta_{i,j}^{m_i})$ containing no other point $(p_{i'},\zeta_{i',j'}^{m_{i'}})$ for $i'\neq i, j'\neq j$. 
Then the preimage by $\sigma_{i,j}$ of $U_{i,j}$ is given by 
\begin{equation}\label{eq:blow-up-coordinates-2}
  \sigma_{i,j}^{-1}(U_{i,j}) =\{ z_i \zeta_{i,j}' = z_{i,j}'(\zeta_i - \zeta_{i,j}^{m_i}) \} \subset U_{i,j} \times \CP. 
\end{equation}
Let us define an affine open chart of $Z^{i,j}$ by 
$$
  U_{i,j}' = (z_{i,j}'\neq 0). 
$$
Now just as in (\ref{eq:total-transformed-spectral-curve}) if we denote by $\theta^{i,j}$ the 
restriction of $\theta$ to the $-\zeta_{i,j}^{m_i}/m_i$-eigenspace $\E^{i,j}$ of $A_i^{m_i}$, 
then the equation of the total transform of $\Sigma$ with respect to $\sigma_{i,j}$ in $U_{i,j}'$ writes 
\begin{equation}\label{eq:total-tranformed-spectral-curve-m}
  \left( \frac 1{z_{i,j}'} \right)^{d_{i,j}} \det \left( z_i \zeta_{i,j}' \lambda_i \mbox{I}_{\E^{i,j}}  - {z_{i,j}'} \tilde{\theta}^{i,j} \right)
\end{equation}
where $d_{i,j}$ stands for the multiplicity of $\zeta_{i,j}^{m_i}$ as an eigenvalue of $-m_i A_i^{m_i}$ and 
\begin{align}
  \tilde{\theta}^{i,j} & \in (p \circ \sigma_{i,j})^* (L (-p_i)  \otimes \End( \E^{i,j})) \notag \\
  \tilde{\theta}^{i,j} & = \left( \d \tilde{Q}_{i,j} + \Lambda_{i,j} (z_i)^{-1} + \mbox{ holomorphic terms} \right) \d z_i
  \label{eq:theta-tilde-local}
\end{align}
for some 
$$
  \tilde{Q}_{i,j} = \tilde{A}_{i,j}^{m_i-1} z_i^{-m_i-1} + \cdots + \tilde{A}_{i,j}^1 z_i^{-1}. 
$$
Specifically, $\tilde{A}_{i,j}^m$ is the restriction of $A_{i,j}^m$ to the $\zeta_{i,j}^{m_i}$-eigenspace of $-m_i A_{i,j}^{m_i}$. 
Again, $E_{i,j}$ is a component of multiplicity $d_{i,j}$ of the total transform of $\Sigma$ by the transformation 
$\sigma_{i,j}$, and the proper transform is given by 
$$
  \det(\zeta_{i,j}'  \lambda_i \mbox{I}_{\E^{i,j}} -  {z_{i,j}'}z_i^{-1}\tilde{\theta}^{i,j}) \in H^0(U_{i,j}', (p\circ \sigma_{i,j} )^*(L\otimes \End(\E^{i,j})) \otimes \O_{Z^{i,j}}(-E_{i,j})). 
$$
The leading-order term of the matrix in the argument of this determinant as $z_i\to 0$ is 
$$
    (\zeta_{i,j}' \mbox{I}_{\E^{i,j}} + {z_{i,j}'} (m_i-1) \tilde{A}_{i,j}^{m_i-1} ) \lambda_i; 
$$
in particular, if for any $j'$ such that $\zeta_{i,j'}^{m_i} = \zeta_{i,j}^{m_i}$ we let 
\begin{equation}\label{eq:point-on-exceptional-divisor}
  (z_{i,j}':\zeta_{i,j}') = ( 1 : \zeta_{i,j'}^{m_i-1} )
\end{equation}
then the above matrix is singular. 
Let us introduce a further subscript to $\zeta_{i,j'}^{m_i-1}$ in order to remember the information that 
$\zeta_{i,j'}^{m_i} = \zeta_{i,j}^{m_i}$: set 
$$
  \zeta_{i,j,j'}^{m_i-1} = \zeta_{i,j'}^{m_i-1}. 
$$
Said differently, the proper transform of $\Sigma$ by $\sigma_{i,j}$ intersects $E_{i,j}$ precisely in the points 
$$
  ( 1 : \zeta_{i,j,j'}^{m_i-1} )
$$
Therefore, we may recursively define a sequence of blow-ups of $Z$ by the following 
\begin{construct}\label{constr:blowup}
\begin{enumerate}
\item we define 
$$
  \sigma_1 :Z^1 \longrightarrow Z 
$$
as the fibre product of the monodial transformations $\sigma_{i,j}$ for all $1\leq i \leq n$ with $m_i\geq 1$
and eigenvalues $\zeta_{i,j}^{m_i}$ of $-m_i A_{i,j}^{m_i}$; 
\item next we blow up $Z^1$ at the points on $E_{i,j}$ corresponding to the points 
(\ref{eq:point-on-exceptional-divisor}) of intersection with the proper transform of $\Sigma$ 
along $\sigma_1$ (these are therefore indexed by joint eigenvalues of $A_i^{m_i}, A_i^{m_i-1}$) 
for all $i$ such that $m_i \geq 2$, denote the blow-up by 
$$
  \sigma_{2} : Z^{2} \longrightarrow Z^{1}; 
$$
\item in the $(m+1)$'th step for all $i$ with $m_i\geq m$ and all eigenvalue 
$$
  \zeta_{i,j,j',\ldots, j^{(m)}}^{m_i-m} = \zeta_{i,j^{(m)}}^{m_i-m}
$$
of $(m-m_i)A_i^{m_i-m}$ we blow up $Z^{m}$ at the intersection point 
\begin{equation}\label{eq:point-on-exceptional-divisor-m}
  (z_{i,j,j'\ldots , j^{(m-1)}}':\zeta_{i,j,j'\ldots , j^{(m-1)}}') 
    = ( 1: \zeta_{i,j,j',\ldots, j^{(m)}}^{m_i-m} ) \in E_{i,j,j'\ldots , j^{(m-1)}}
\end{equation}
of the proper transform of $\Sigma$ along $\sigma_{m-1}$ with the exceptional divisors 
of $\sigma_{m-1}$ projecting to the point $p_i$, call the resulting exceptional divisor 
$$
  E_{i,j,j'\ldots , j^{(m)}}
$$
with homogeneous coordinates 
$$
  (z_{i,j,j'\ldots , j^{(m)}}':\zeta_{i,j,j'\ldots , j^{(m)}}'), 
$$
and set 
$$
  U_{i,j,j'\ldots , j^{(m)}}' = (z_{i,j,j'\ldots , j^{(m)}}' \neq 0) \subset Z^{m};
$$
\item finally we define (\ref{eq:master-surface}) as $Z^M$ for the value $M = \max_{1\leq i \leq n} (m_i)$ 
with 
$$
  \tilde{\sigma} = \sigma_1 \circ \cdots \circ \sigma_M. 
$$
\end{enumerate}
\end{construct}
Notice that the degree of $\tilde{Q}_{i,j}$ appearing in (\ref{eq:theta-tilde-local}) is one less than that of $Q_i$; 
similarly, in the $(m+1)$'th step of the recursive procedure the proper transform of $\Sigma$ in a local chart 
is defined by a section 
\begin{equation}\label{eq:proper-transform-spectral-curve-m}
 \det\left( \zeta_{i,j,j',\ldots , j^{(m)}}'  \lambda_i \mbox{I}_{\E^{i,j,j',\ldots , j^{(m)}}} -  
	z_{i,j,j',\ldots , j^{(m)}}'z_i^{-m}\tilde{\theta}^{i,j,j',\ldots, j^{(m)}} \right) 
\end{equation}
for some 
\begin{align}
  \tilde{\theta}^{i,j,j',\ldots, j^{(m)}} & = \left( \d \tilde{Q}_{i,j,j',\ldots ,  j^{(m)}} + \Lambda_{i,j,j',\ldots ,  j^{(m)}} (z_i)^{-1} 
	    + \mbox{ holomorphic terms} \right) \d z_i \label{eq:transformed-irregular-part}\\
   \tilde{Q}_{i,j,j',\ldots, j^{(m)}} & = {A}_{i,j,j',\ldots, j^{(m)}}^{m_i-m-1} z_i^{-m_i+m+1} + \cdots + {A}_{i,j, j',\ldots, j^{(m)}}^1 z_i^{-1} 
    \notag 
\end{align}
where the matrices on the right hand side of the latter equation are restrictions of $A_i^{m_i-m-1},\ldots, A_i^1,\Lambda_i$ to 
joint eigenspaces of 
$$
  -m_i {A}_{i}^{m_i},\ldots , (m-m_i) {A}_{i}^{m_i-m}
$$
for the eigenvalues 
$$
  \zeta_{i,j}^{m_i}, \zeta_{i,j,j'}^{m_i-1}, \ldots ,  \zeta_{i,j,j',\ldots, j^{(m)}}^{m_i-m}
$$
respectively. 
We will denote the dimension of this joint eigenspace by 
\begin{equation}\label{eq:simultaneous-eigenspace-dimension}
  d_{i,j,j',\ldots ,j^{(m)}}. 
\end{equation}
Now just as in (\ref{eq:second-order-parabolic-map}), the map $\tilde{\theta}^{i,j,j',\ldots, j^{(m_i-1)}}$ 
is of the form 
\begin{equation}\label{eq:higher-order-parabolic-map}
  z_i^{-m_i} \tilde{\theta}^{i,j,j',\ldots, j^{(m_i-1)}} = (\Lambda_{i,j,j',\ldots ,  j^{(m_i-1)}} + O(z_i)) \lambda_i, 
\end{equation}
hence it gives rise to maps 
$$
  z_i^{-m_i} \tilde{\theta}^{i,j,j',\ldots, j^{(m_i-1)}}_{\alpha} : (p\circ \tilde{\sigma})^* \E^{i,j,j',\ldots , j^{(m_i-1)}}_{\alpha}
  \otimes L^{\vee} \longrightarrow (p\circ \tilde{\sigma})^* \E^{i,j,j',\ldots , j^{(m_i-1)}}_{\alpha}
$$
for every $\alpha\in\R$ and $i,j,j',\ldots, j^{(m_i-1)}$, where $\E^{i,j,j',\ldots , j^{(m)}}_{\alpha}$ 
stands for the filtration on the corresponding simultaneous eigenspace obtained from the parabolic structure 
by virtue of Claim \ref{clm:A-preserves-F}. 
Now just as we refined the formula (\ref{eq:proper-transformed-spectral-curve}) defining the 
proper transform of the spectral curve to define a parabolic sheaf (\ref{eq:parabolic-spectral-sheaf}) 
we may again refine (\ref{eq:proper-transform-spectral-curve-m}) to define coherent sheaves on $\widetilde{Z}$ by 
the formula (\ref{eq:spectral-sheaf-E-alpha}) away from the charts $U_{i,j,j'\ldots , j^{(m_i-1)}}'$ and by 
\begin{equation}\label{eq:higher-order-parabolic-spectral-sheaf}
  S_{\E_{\alpha}} = \coker  \left( \zeta_{i,j,j',\ldots , j^{(m_i-1)}}'  \lambda_i \mbox{I}_{\E^{i,j,j',\ldots , j^{(m_i-1)}}_{\alpha}} -  
	z_{i,j,j',\ldots , j^{(m_i-1)}}'z_i^{-m_i}\tilde{\theta}^{i,j,j',\ldots, j^{(m_i-1)}}_{\alpha} \right) 
\end{equation}
on $U_{i,j,j'\ldots , j^{(m_i-1)}}'$, the argument being a section of the bundle 
\begin{align*}
  (p\circ \tilde{\sigma})^* & \left( \End (\E^{i,j,j',\ldots , j^{(m_i-1)}}_{\alpha}) \otimes L \right) \otimes \tilde{\sigma}^* \O_Z(1) \\ 
   & \otimes \O_{\widetilde{Z}} (- E_{i,j,j',\ldots, j^{(m_i-1)}} ). 
\end{align*}

The results of this section can be summarized as follows. 

\begin{thm}\label{thm:parabolicBNR}
 The correspondence 
 $$
  (\E_{\bullet},\theta ) \mapsto S_{\E_{\bullet}}
 $$
 establishes an equivalence of categories between 
 \begin{enumerate}
  \item on the one hand, parabolic Higgs bundles of rank $r$ on $\CP$ with \label{prop:parabolicBNR1}
    \begin{enumerate}
     \item for all $1\leq i \leq n$ an irregular singularity with pole of order $m_i$ at $p_i$ locally of the form (\ref{eq:theta-local}) with (\ref{eq:Qi}) \label{prop:parabolicBNR12}
	with semi-simple matrices $A_i^m$ and endowed with a compatible parabolic structure,
      \item and a reduced irreducible spectral curve \label{prop:parabolicBNR13}
    \end{enumerate}
  \item and on the other hand, $\R$-parabolic pure sheaves $S_{\alpha}$ of dimension $1$ and rank $1$ with parabolic divisor 
      $$
	(p \circ \tilde{\sigma})^{-1}(D_{\red})
      $$
      on $\widetilde{Z}$, with support $\widetilde{\Sigma}$ satisfying the following properties: \label{prop:parabolicBNR2}
  \begin{enumerate}
     \item $\widetilde{\Sigma}$ is reduced and irreducible \label{prop:parabolicBNR21}
     \item $\widetilde{\Sigma}$ is generically $r$ to $1$ over $C$, \label{prop:parabolicBNR22}
     \item $\widetilde{\Sigma} \cap (\xi = 0) = \emptyset$, \label{prop:parabolicBNR23}
     \item for all $1\leq i \leq n$ we have $\widetilde{\Sigma}\cap E_{i,j,j',\ldots, j^{(m_i-1)}}\subset U_{i,j,j',\ldots, j^{(m_i-1)}}'$, 
      i.e. the support does not pass through the point at infinity of the exceptional divisor $E_{i,j,j',\ldots, j^{(m_i-1)}}$,
      \label{prop:parabolicBNR24}
      \item for all $1\leq i \leq n$ the intersection $\widetilde{\Sigma}\cap E_{i,j,j',\ldots, j^{(m_i-1)}}$ 
	consists of $d_{i,j,j',\ldots, j^{(m_i-1)}}$ points counted with multiplicity, \label{prop:parabolicBNR24,5}
      \item for all $1\leq i \leq n$ and all $m<m_i$ the proper transform of $E_{i,j,j',\ldots, j^{(m-1)}}$ in $\widetilde{Z}$ 
      does not intersect $\widetilde{\Sigma}$. 
	\label{prop:parabolicBNR25}
   \end{enumerate}
 \end{enumerate}
\end{thm}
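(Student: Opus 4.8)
The plan is to assemble the theorem from the local analysis already carried out, treating the two directions of the correspondence separately and then verifying functoriality. For the forward direction I would start from a parabolic Higgs bundle $(\E_\bullet,\theta)$ as in (\ref{prop:parabolicBNR1}) and run the recursive blow-up of Construction \ref{constr:blowup}, at each stage invoking the convergent eigenspace splitting of Claim \ref{clm:regular-splitting} to lower by one the pole order of the restricted Higgs field on each block $\E^{i,j,j',\ldots}$. The sheaves $S_{\E_\alpha}$ on $\widetilde Z$ are then defined by (\ref{eq:higher-order-parabolic-spectral-sheaf}) in the charts $U'_{i,j,j'\ldots}$ and by (\ref{eq:spectral-sheaf-E-alpha}) elsewhere; purity of dimension $1$ and the $\R$-parabolic structure with divisor $(p\circ\tilde\sigma)^{-1}(D_{\red})$ follow by the same Auslander--Buchsbaum and projection-formula arguments as in Claim \ref{clm:R-parabolic}, applied inductively along the tower. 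Rank $1$ holds because after the final blow-up each relevant block is logarithmic, so Proposition \ref{prop:parabolicBNR-logarithmic} applies fibrewise.

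The support properties (\ref{prop:parabolicBNR21})--(\ref{prop:parabolicBNR25}) I would read off directly from the construction. Reducedness and irreducibility of $\widetilde\Sigma$ in (\ref{prop:parabolicBNR21}) are inherited from the hypothesis on $\Sigma$, since proper transform preserves both; the generic degree $r$ in (\ref{prop:parabolicBNR22}) is preserved because $\tilde\sigma$ is an isomorphism away from the fibres over $D_{\red}$; disjointness from $(\xi=0)$ in (\ref{prop:parabolicBNR23}) holds because $\Sigma$, defined by $\det(\xi\otimes p^*\theta+\zeta)$, restricts to $\zeta^r$ on the divisor at infinity and hence never meets it, a property the finitely-centred blow-ups transmit to $\widetilde\Sigma$; and (\ref{prop:parabolicBNR24})--(\ref{prop:parabolicBNR25}) are exactly the computations (\ref{eq:point-on-exceptional-divisor})--(\ref{eq:point-on-exceptional-divisor-m}), which show that $\widetilde\Sigma$ meets each $E_{i,j,j',\ldots}$ only in the finite chart $U'_{i,j,j'\ldots}$, in $d_{i,j,j',\ldots}$ points counted with multiplicity and indexed by the joint eigenvalues, while being separated from the proper transforms of all earlier exceptional divisors.

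For the converse direction, given a sheaf $S_\bullet$ satisfying (\ref{prop:parabolicBNR21})--(\ref{prop:parabolicBNR25}) I would set $\E_\alpha=(p\circ\tilde\sigma)_* S_\alpha\otimes L^\vee$, twisting as in (\ref{eq:second-order-parabolic-map}), and recover $\theta$ from multiplication by the vertical coordinate $\zeta$ exactly as in (\ref{eq:original-BNR}), pushed down through the tower. Condition (\ref{prop:parabolicBNR24}) guarantees that at every level the support stays in the finite charts $U'_{i,j,j'\ldots}$, so the pushforward is locally free of the expected rank and $\zeta$ descends to a pole of the prescribed order; the intersection multiplicities of (\ref{prop:parabolicBNR24,5}) together with the separation property (\ref{prop:parabolicBNR25}) then let me reconstruct, order by order, the semi-simple matrices $A_i^m$ of (\ref{eq:Qi}): the points $\widetilde\Sigma\cap E_{i,j,j',\ldots}$ prescribe the successive eigenvalues $\zeta_{i,j,j',\ldots}^{m_i-m}$, and the numbers $d_{i,j,j',\ldots}$ prescribe the dimensions of the joint eigenspaces, so that blowing down reassembles $Q_i$ with commuting semi-simple coefficients. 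Compatibility of the parabolic structure is inherited from the $\R$-parabolic structure of $S_\bullet$ by Claim \ref{clm:A-preserves-F}.

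Finally I would check that the two assignments are quasi-inverse and functorial. Quasi-invertibility reduces, block by block and chart by chart, to the statement that a $\pi_*\O_{\widetilde\Sigma}$-module structure on $\E$ is the same datum as the $\O_C$-linear endomorphism $\theta$, just as in the last paragraph of the proof of Proposition \ref{prop:parabolicBNR-logarithmic}; functoriality follows because pullback, cokernel, and pushforward are each functorial. The main obstacle, in my view, is the converse direction: one must verify that the purely geometric incidence data of $\widetilde\Sigma$ with the tower of exceptional divisors reassembles under $\tilde\sigma_*$ into a Higgs field of the exact local normal form (\ref{eq:theta-local}), with the semi-simplicity and commutation of the $A_i^m$ genuinely recovered rather than merely the spectral data. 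Tracking the precise twists by $\O_{\widetilde Z}(-E_{i,j,j',\ldots})$ and the trivialisations $\lambda_i$ through each blow-down, so that the leading terms (\ref{eq:higher-order-parabolic-map}) match correctly across successive levels, is where the bookkeeping is heaviest.
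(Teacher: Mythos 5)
Your proposal is correct and follows essentially the same route as the paper: the forward functor via the cokernel sheaves (\ref{eq:higher-order-parabolic-spectral-sheaf}) with purity from Auslander--Buchsbaum and the parabolic structure from the projection formula, the support conditions read off from Construction \ref{constr:blowup}, and the converse via pushforward with the local normal form recovered by tracking $\zeta_i$ through the blow-up tower. You correctly single out as the crux the reconstruction of the normal form (\ref{eq:theta-local}) under blow-down, which is exactly where the paper's proof does its main inductive computation; the only cosmetic difference is that the paper derives parabolic compatibility of the residue from the $\O_{\widetilde{Z}}$-module structure of $S_{\E_\alpha}$ (multiplication by the local coordinate $\zeta'_{i,j,\ldots,j^{(m_i-1)}}$) rather than from Claim \ref{clm:A-preserves-F} directly.
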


\begin{proof}
 The functor (\ref{prop:parabolicBNR1}) $\rightarrow$ (\ref{prop:parabolicBNR2}) is given by (\ref{eq:higher-order-parabolic-spectral-sheaf}). 
 The sheaves $S_{\E_{\bullet}}$ satisfy properties (\ref{prop:parabolicBNR21}) by assumption and (\ref{prop:parabolicBNR22}) 
 because $\E$ is of rank $r$. 
 Furthermore as $\widetilde{\Sigma}$ is the spectral curve of a regular map 
 $$
  \E \to \E \otimes K_C(D), 
 $$
 it stays in the affine subset $\xi\neq 0$ of $Z$. 
 By (\ref{eq:higher-order-parabolic-map}) $\widetilde{\Sigma}$ intersects the exceptional divisor 
 $E_{i,j,j',\ldots, j^{(m_i-1)}}$ in the points 
 \begin{equation}\label{eq:eigenvalue-residue}
  (z_{i,j,j'\ldots , j^{(m_i-1)}}':\zeta_{i,j,j'\ldots , j^{(m_i-1)}}') 
    = ( 1: \lambda_{i,j,j',\ldots, j^{(m_i-1)}, j^{m_i}} )
 \end{equation}
 where the $\lambda_{\cdots}$ on the right hand side stand for the eigenvalues of $\Lambda_i$ restricted to the 
 joint eigenspaces of $A_i^{m_i},\ldots, A_i^1$, and this shows (\ref{prop:parabolicBNR24}) and (\ref{prop:parabolicBNR24,5}). 
 Next, (\ref{prop:parabolicBNR25}) holds because to define $\widetilde{Z}$ in Construction \ref{constr:blowup} 
 we blew up all points (\ref{eq:point-on-exceptional-divisor-m}) of intersection of the proper transform of $\Sigma$ 
 with $E_{i,j,j',\ldots, j^{(m-1)}}$ and the support of the sheaves $S_{\E_{\bullet}}$ is the proper transform 
 $\widetilde{\Sigma}$. 
 Purity of $S_{\E_{\bullet}}$ follows the exact same argument as in the case $m_i=1$ (c.f. Claim \ref{clm:R-parabolic}) 
 based on the Auslander--Buchsbaum formula. 
 Finally, again as in the case $m_i=1$ the identity 
 $$
  (p \circ \tilde{\sigma})^* \O_C(D_{\red}) = \O_{\widetilde{Z}} ((p \circ \tilde{\sigma})^{-1}(D_{\red}))
 $$
 immediately shows that the sheaves $S_{\E_{\bullet}}$ are $\R$-parabolic with divisor $(p \circ \tilde{\sigma})^{-1}(D_{\red})$. 
 
 The quasi-inverse is the direct image functor 
 \begin{equation}\label{eq:direct-image-of-spectral-sheaf}
  \E_{\alpha} = (p \circ \sigma)_* S_{\E_{\alpha}}, \quad 
    \theta_{\alpha} = \pi_* (\zeta \cdot )
 \end{equation}
 by a straightforward generalisation of Claim \ref{clm:R-parabolic} to the case of order $m_i\geq 2$. 
 There remains to check that 
 \begin{itemize}
 \item the irregular part of $\theta$ is of the form (\ref{eq:theta-local}), (\ref{eq:Qi}) and 
 \item that the residue $\Lambda_i$ of $\theta$ at $z_i=0$ respects the parabolic filtration of 
 $\E_{\bullet}$ at $p_i$. 
 \end{itemize}
 For the proof of these statements observe that by the construction of $\E_{\alpha}$ it has a local splitting 
  $$
   \E_{\alpha} = \oplus_{j, j', \ldots, j^{(m_i-1)}} \E^{i, j, j', \ldots, j^{(m_i-1)}}_{\alpha}
 $$
 according to the support of sections of $S_{\E_{\alpha}}$: local sections of $\E^{i, j, j', \ldots, j^{(m_i-1)}}_{\alpha}$ 
 are defined as the push-forward of local sections of $S_{\E_{\alpha}}$ supported on the branches of 
 $\widetilde{\Sigma}$ intersecting a given exceptional divisor $E_{i,j,j',\ldots, j^{(m_i-1)}}$. 
 From (\ref{eq:blow-up-coordinates-2}), on $U_{i,j}'$ we deduce the identity 
 \begin{equation}\label{eq:zeta_i}
   \zeta_i = \zeta_{i,j}^{m_i} + z_i \frac{\zeta_{i,j}'}{z_{i,j}'}. 
 \end{equation}
 Here by the definition of $U_{i,j}'$ we may normalize $z_{i,j}'=1$. 
 In the case $m_i > 1$ Construction \ref{constr:blowup} proceeds by applying a blow-up 
 $\sigma_{i,j,j'}$ at the point $\zeta_{i,j}' = \zeta_{i,j,j'}^{m_i-1}$ of $E_{i,j}$ given by the formula 
 $$
  z_i \zeta_{i,j,j'}' = z_{i,j,j'}' (\zeta_{i,j}' - \zeta_{i,j,j'}^{m_i-1}).
 $$
 We may again set $z_{i,j,j'}' = 1$ on $U_{i,j,j'}'$ and solve this expression for $\zeta_{i,j}'$: 
 $$
  \zeta_{i,j}' = \zeta_{i,j,j'}^{m_i-1} + z_i \zeta_{i,j,j'}'. 
 $$
 Plugging this into (\ref{eq:zeta_i}) yields 
 $$
  \zeta_i = \zeta_{i,j}^{m_i} + z_i \zeta_{i,j,j'}^{m_i-1} +  z_i^2 \zeta_{i,j,j'}'. 
 $$
 By induction on $m_i$, this argument shows on $U_{i,j,j'\ldots , j^{(m_i-1)}}'$ 
 (using the normalization $z_{i,j,j'\ldots , j^{(m_i-1)}}'=1$) the identity 
 $$
  \zeta_i = \zeta_{i,j}^{m_i} + z_i \zeta_{i,j,j'}^{m_i-1} + \cdots + z_i^{m_i-1} \zeta_{i,j,j'\ldots , j^{(m_i-1)}}^1 
    + z_i^{m_i} {\zeta_{i,j,j'\ldots , j^{(m_i-1)}}'}
 $$
 Here $\zeta_{i,j}^{m_i}, \ldots, \zeta_{i,j,j'\ldots , j^{(m_i-1)}}^1$ are constants, therefore 
 the restriction to $\E^{i, j, j', \ldots, j^{(m_i-1)}}_{\alpha}$ of the direct image of the multiplication map by 
 $\zeta = \zeta_i \lambda_i$ has the expansion 
 $$
  \left( \frac{\zeta_{i,j}^{m_i}}{z_i^{m_i+1}} + \cdots + \frac{\zeta_{i,j}^{1}}{z_i^2}\right) \Id_{\E^{i, j, j', \ldots, j^{(m_i-1)}}_{\alpha}} \d z 
  + O \left( z_i^{-2+\frac 1r}\right) \d z, 
 $$
 which is precisely the first assertion. Furthermore, the residue of $\theta$ at $p_i$ is obtained as the direct image of the 
 multiplication map by 
 $$
  \zeta_{i,j,j'\ldots , j^{(m_i-1)}}' 
 $$
 which is a local coordinate of the surface $\widetilde{Z}$. As $S_{\E_{\alpha}}$ is a sheaf of 
 $\O_{\widetilde{Z}}$-modules, it is clearly preserved by multiplication by $\zeta_{i,j,j'\ldots , j^{(m_i-1)}}'$.  
 Therefore $\res_{p_i}(\theta)$ preserves $\E_{\alpha}$, which is the second assertion. 
 
\end{proof}

\section{Poisson isomorphism}\label{sec:Poisson}

In this section we prove that the natural holomorphic Poisson structures on the moduli spaces associated 
to the groupoids appearing in Theorem \ref{thm:parabolicBNR} are preserved by the correspondence of the Theorem. 
The proof closely follows that of Proposition 5.1 \cite{Sz-plancherel}. 
We start by defining these Poisson structures. 

\subsection{Irregular Dolbeault moduli space}
Let us first treat the moduli space of stable irregular parabolic Higgs bundles on $C$ with fixed semi-simple 
irregular part  (\ref{eq:theta-local}), (\ref{eq:Qi}) and residue in a fixed regular orbit: 
by \cite{Biq-Boa} it carries an Atiyah--Bott hyper-K\"ahler structure, in particular for the Dolbeault holomorphic 
structure $I$ it admits a holomorphic symplectic structure. 
These irregular Dolbeault moduli spaces $M_{\Dol}^{\irr}$ may then be put into a family 
\begin{equation}\label{eq:Dolbeault-moduli}
  \Mod_{\Dol}^{\irr} \to \C^{r\sum_i (m_i+1)}
\end{equation}
by varying the eigenvalues of the matrices $A_i^m$ and of $\Lambda_i$ arbitrarily. 
In concrete terms, $\Mod_{\Dol}^{\irr}$ represents the functor from Artinian schemes over $\C$ to sets mapping 
\begin{itemize}
 \item a scheme $T$ to the set of isomorphism classes of parabolically stable pairs $(\bar{\E}, \bar{\theta})$ where $\bar{\E}$ is 
a regular vector bundle over $T \times C$ and 
$$
  \bar{\theta} \in H^0(T \times C, \End(\bar{\E}) \otimes_{\O_{T \times C}} p_C^* L), 
$$
with $p_C:T \times C \to C$ the projection map;
\item  and a morphism $S\to T$ of such schemes to the set of maps of Higgs bundles parameterized by $S$ and $T$ respectively. 
\end{itemize}
The condition of parabolic stability is a parabolic version of slope-stability: one first introduces the notion of 
parabolic degree as the sum of the usual degree and all the parabolic weights, and then induces parabolic 
weights and parabolic degree on sub-objects; for further details see \cite{Boa-survey}. 
The map in (\ref{eq:Dolbeault-moduli}) associates to $(\E,\theta)$ the eigenvalues of the polar part of $\theta$ 
considered as a meromorphic section of $\End(\E)\otimes_{\O_C}K_C$. 
\begin{rk}\label{rk:dR-Dol-moduli}
\begin{enumerate}
 \item To the knowledge of the author, this irregular Dolbeault moduli functor has not yet been studied algebraically. 
  Notice nonetheless that E. Markman studied in \cite{Mark} a related moduli problem, which is essentially the same as our functor above except that 
  there the parabolic structure is not present and the irregular type may be twisted (non-semi-simple). 
\item An analogous de Rham moduli space is constructed algebraically in \cite{inaba-saito}; 
  however, as it is explained in Remark 1.2 op. cit., the data they fix at the singularities is somewhat different from the usual 
  irregular type, and this latter cannot be determined from it. On the other hand, Inaba and Saito consider more general 
  families of puctured curves 
  $$
    \mathcal{C} \to T
  $$
  instead of just a product. 
\item Depending on the eigenvalues of the singular parts, the moduli space $M_{\Dol}^{\irr}$ might be empty. 
For instance, as it readily follows from the residue theorem, this will be the case unless the sum of the traces of all the residues is an integer $-\delta$. 
In this case, we have $\deg(\E) = \delta$. 
However, the above consequence of the residue theorem is just a first easy condition in deciding whether or not the moduli space is empty --- 
in general, this problem, called irregular Deligne--Simpson problem is hard. 
The correspondence of Theorem \ref{thm:parabolicBNR} holds (but is vacuous) even if the irregular moduli space is empty. 
\end{enumerate}
\end{rk}
Elements of $\C^{r\sum_i (m_i+1)}$ will be denoted as nested sequeces of eigenvalues 
$$
  (\zeta_{i,j}^{m_i},\ldots ,\zeta_{i,j,j'\ldots , j^{(m_i-1)}}^1, \lambda_{i,j,j'\ldots , j^{(m_i)}})_{i,j,j'\ldots , j^{(m_i)}}
$$
where the $\zeta_{i,j,j'\ldots , j^{(m)}}^m$ denote eigenvalues of $-m A^m_i$ for $0\leq m < m_i$ and 
$\lambda_{i,j,j'\ldots , j^{(m_i)}}$ is an eigenvalue of $\Lambda_{i,j,j',\ldots ,  j^{(m_i-1)}}$ (\ref{eq:eigenvalue-residue}). 
One could equally let the parabolic weights $\alpha_i^l$ vary and this would give further deformation parameters, 
but we will ignore this point here. 
The smooth part of $\Mod_{\Dol}^{\irr}$ is a holomorphic Poisson manifold with Poisson bivector field denoted by $\Pi_{\Dol}$. 
Let us be more specific concerning this holomorphic Poisson structure, following \cite{bis-ram} where the non-singular case 
is treated. 
It is easy to see that the deformation theory of an irregular Higgs bundle $(\E,\theta)$ in $\Mod_{\Dol}^{\irr}$ 
is governed by the hypercohomology spaces $\H^{\bullet}$ of the complex 
\begin{equation}\label{eq:deformation-complex}
  \End(\E) \xrightarrow{\ad_{\theta}} \End(\E) \otimes L 
\end{equation}
with $L=K_C(D)$. 
Infinitesimal deformations are given by the first hypercohomology space $\H^{1}$ and $\H^2$ is the obstruction space. 
By Grothendieck duality, the cotangent space of (\ref{eq:Dolbeault-moduli}) is then given by the first hypercohomology of the complex 
$$
   \End(\E) \otimes \O_C(-D) \xrightarrow{\ad_{\theta}} \End(\E) \otimes K_C. 
$$
Given two cotangent vectors 
$$
  [T],[X]\in\H^{1} (\End(\E) \otimes \O_C(-D) \to \End(\E) \otimes K_C) 
$$
represented by endomorphism-valued $1$-forms $T,X$ their cup product 
$$
  [T]\cup [X] = [T \wedge X]
$$
belongs to the second hypercohomology group of the complex 
\begin{equation}\label{eq:C-complex}
  C^0 \xrightarrow{(\ad_{\theta}\otimes \Id,\Id\otimes \ad_{\theta})} C^1 \xrightarrow{\Id \otimes \ad_{\theta} - \ad_{\theta} \otimes \Id} C^2
\end{equation}
with 
\begin{align*}
  C^0  = & (\End(\E) \otimes \O_C(-D)) \otimes (\End(\E) \otimes \O_C(-D)) \\ 
  C^1 = & [(\End(\E) \otimes K_C) \otimes (\End(\E) \otimes \O_C(-D))] \\ 
  & \oplus [(\End(\E)\otimes \O_C(-D)) \otimes (\End(\E) \otimes K_C)] \\ 
  C^2 = & (\End(\E) \otimes K_C) \otimes (\End(\E) \otimes K_C). 
\end{align*}
The $\ad$-invariant symmetric bilinear form on $\Gl_r$ 
\begin{equation}\label{eq:Killing-form}
  B: \varphi, \psi \mapsto \tr(\varphi \psi) 
\end{equation}
induces a chain map $\Phi$ from (\ref{eq:C-complex}) to 
\begin{equation}\label{eq:L-complex}
  0 \to K_C(-D) \to 0. 
\end{equation}
In concrete terms, to a cocycle 
$$
  (\varphi_1 \otimes \psi_1 ,  \psi_2  \otimes \varphi_2) \in C^1
$$
the map $\Phi$ associates the section 
$$
  B (\varphi_1, \psi_1) + B (\psi_2, \varphi_2). 
$$
Plainly $\Phi$ is a chain map because the bilinear form $B$ is $\ad$-invariant 
$$
  B(\ad_{\theta}(\eta) , \zeta ) + B(\eta , \ad_{\theta}(\zeta)) = 0. 
$$
It then follows that the image of $[T]\cup [X]$ by $\Phi$ defines a degree 
$2$ hypercohomology class in (\ref{eq:L-complex}), i.e. a class in 
\begin{equation}\label{eq:H1K}
   H^1(C,K_C(-D)) \cong (H^0(C, \O(D)))^{\vee}. 
\end{equation}
The dual of the vector space on the right hand side fits into a short exact sequence 
\begin{equation}\label{eq:H0SES}
  0 \to H^0(C, \O) \to H^0(C, \O(D)) \to \oplus_{i=1}^n \C_{p_i}^{m_i+1} \to 0, 
\end{equation}
where $\C_p$ stands for the skyscraper sheaf supported at the point $p\in C$. 
In particular, $H^0(C, \O(D))$ contains the element $1\in H^0(C, \O)$, hence any element in 
(\ref{eq:H1K}) can be evaluated on this class. We may therefore define an alternating bilinear map by 
\begin{align*}
  \Pi_{\Dol}: T^*\Mod_{\Dol}^{\irr} \times T^*\Mod_{\Dol}^{\irr} & \to \C \\
  ([T],[X]) & \mapsto \langle \Phi ([T] \cup [X]) , 1 \rangle  
\end{align*}
where $\langle .,. \rangle$ stands for Serre duality (\ref{eq:H1K}). 
As usual, this formula is the reduction of an infinite-dimensional flat pairing on an $L^2$-space 
of $1$-forms with values in the endomorphisms of the smooth vector bundle underlying $\E$, 
so the Schouten-bracket $[\Pi_{\Dol},\Pi_{\Dol}]$ of $\Pi_{\Dol}$ with itself is $0$. 
Since using the Dolbeault resolution of $K_C$ Serre duality is defined by integration of $2$-forms on 
$C$ and $\Phi$ is given by (\ref{eq:Killing-form}) we infer that the restriction of this pairing to the 
symplectic leaves reads 
$$
  \int_C \tr(T\wedge X)
$$
which is the usual Atiyah--Bott holomorphic symplectic form. 
Therefore, $\Pi_{\Dol}$ defines the holomorphic Poisson structure we were looking for. 
Let us point out that the vector spaces in (\ref{eq:H0SES}) localised at the points $p_i$ 
correspond to the infinitesimal modifications of some Casimir operators, i.e. to tangent vectors of  
the parameter space $\C^{r\sum_i (m_i+1)}$. 
\begin{rk}
 The Poisson structure $\Pi_{\Dol}$ essentially matches up with that of the Main Theorem of \cite{Mark} corresponding to the section 
 $$
  1 \in H^0(C,\O_C(D)) \cong H^0(C,L\otimes K^{-1}).
 $$
\end{rk}

\subsection{Relative Picard bundles}
We now turn our attention to the category of sheaves on $\widetilde{Z}$ satisfying the properties 
(\ref{prop:parabolicBNR21})--(\ref{prop:parabolicBNR25}) of Theorem \ref{thm:parabolicBNR}. 
Observe first that $Z$ is a Poisson surface for the canonical Liouville $2$-form on the total space 
of the canonical line bundle $K_C$. The degeneracy divisor of this Poisson structure on $Z$ is given by 
$$
  \pi^{-1} D + 2 (\xi )
$$
Therefore, the pull-back of this $2$-form to $\widetilde{Z}$ by $\tilde{\sigma}$ also defines a Poisson structure. 
Let us determine its degeneracy divisor: by differentiating (\ref{eq:blow-up-coordinates-2}) twice 
(and as usual setting $z_{i,j}'=1$ on $U_{i,j}'$) one easily derives the formula 
$$
  z_i \d z_i \wedge \d \zeta_{i,j}' = \d z_i \wedge \d \zeta_i. 
$$
As we have already noticed after (\ref{eq:Uil'}), on $U_{i,j}'$ the equation $z_i = 0$ defines precisely the exceptional divisor $E_{i,j}$. 
By an abuse of notation let $F|_{p_i}$ and $E_{i,j,j',\ldots, j^{(m-1)}}$ denote the proper transforms of the fibre 
$F|_{p_i}$ and of the exceptional divisor $E_{i,j,j',\ldots, j^{(m-1)}}$ with respect to the various iterated blow-ups 
of Construction \ref{constr:blowup}. 
(For the exceptional divisors this is obviously only applicable for $\sigma_{m'}$ with $m'>m$.)
We infer that the pull-back by $\sigma_{i,j}$ of the canonical $2$-form has a pole of order one less on $E_{i,j}$ than on $F|_{p_i}$. 
Now by an easy induction argument we can show that the pull-back by $\sigma_1 \circ \sigma_2$ of the 
canonical $2$-form has a pole of order $2$ less on $E_{i,j,j'}$ than on $F|_{p_i}$, and so on, the pull-back by 
$\sigma_1 \circ \cdots \circ \sigma_{m_i}$ of the canonical $2$-form has a pole of order $m_i$ less on 
$E_{i,j,j',\ldots, j^{(m_i-1)}}$ than on $F|_{p_i}$. 
It follows that the degeneracy divisor of the pull-back of the canonical Poisson structure of $Z$ to $\widetilde{Z}$ is given by 
\begin{align}
  D_{\infty} & = \pi^{-1} D + 2 (\xi ) - \left( \sum_{i,j} E_{i,j} + 
    (\sum_{j'} 2 E_{i,j,j'} + ( \cdots + \sum_{j^{(m_i-1)}} m_i E_{i,j,j',\ldots, j^{(m_i-1)}} ) \cdots ) \right), \notag \\
  & =  2 (\xi ) + \sum_i (m_i + 1) F|_{p_i} +  (\sum_{j} m_i E_{i,j} + (\sum_{j'} (m_i - 1) E_{i,j,j'} + 
  ( \cdots + \sum_{j^{(m_i-1)}} E_{i,j,j',\ldots, j^{(m_i-1)}} ) \cdots )) \label{eq:degeneracy-divisor}
\end{align}
Consider first the Hilbert scheme 
\begin{equation}\label{eq:Hilbert-scheme}
   \Hilb(\widetilde{Z},H)
\end{equation}
of curves on $\widetilde{Z}$ with a given Hilbert polynomial $H$. 
Specifically, the Picard group of $\widetilde{Z}$ is generated by the fibre class $F$, the class of the infinity section 
$C_{\infty}$ and the classes of the exceptional divisors $E_{i,j,j',\ldots, j^{(m-1)}}$ of the blowups 
$\sigma_1,\ldots ,\sigma_M$. An ample line bundle on $\widetilde{Z}$ is given by 
$$
  \mathcal{L} = \O_{\widetilde{Z}}\left( -\sum_{m,i,j,j',\ldots, j^{(m-1)}} E_{i,j,j',\ldots, j^{(m-1)}} \right) \otimes 
	\tilde{\sigma}^*\O_{Z}(1) \otimes (p \circ \tilde{\sigma})^*\O_{C}(1). 
$$
The intersection form on the second homology of $\widetilde{Z}$ is non-degenerate and we may consider the homology 
class dual to the class of the divisor 
$$
  r F + \sum_{i,j,j',\ldots ,j^{(m_i-1)}} d_{i,j,j',\ldots ,j^{(m_i-1)}} E_{i,j,j',\ldots ,j^{(m_i-1)}}
$$
where we recall that $d_{i,j,j',\ldots ,j^{(m_i-1)}}$ was defined in (\ref{eq:simultaneous-eigenspace-dimension}) 
as the dimension of the simultaneous eigenspace of the matrices $-m_iA_i^{m_i},\ldots , -A_i^1$ for the eigenvalues 
$$
    \zeta_{i,j}^{m_i}, \zeta_{i,j,j'}^{m_i-1}, \ldots ,  \zeta_{i,j}^{1}
$$
respectively. 
The generic curve in this class will then intersect the generic fiber of $\widetilde{Z}$ in $r$ points, 
the exceptional divisor $E_{i,j,j',\ldots ,j^{(m_i-1)}}$ in $d_{i,j,j',\ldots ,j^{(m_i-1)}}$ points counted with 
multiplicity, and will be disjoint from $C_{\infty}$ and the exceptional divisors $E_{i,j,j',\ldots ,j^{(m-1)}}$ 
with $m<m_i$. 
In different terms, such a curve satisfies the conditions (\ref{prop:parabolicBNR21})--(\ref{prop:parabolicBNR25}) 
of Theorem \ref{thm:parabolicBNR}. 
We then pick $H$ to be the Hilbert polynomial with respect to $\mathcal{L}$ of a curve in this class.
Notice that the generic curve in this family intersects $E_{i,j,j',\ldots ,j^{(m_i-1)}}$ in 
$d_{i,j,j',\ldots ,j^{(m_i-1)}}$ distinct points, and is smooth. 
Let us denote by $B$ the subscheme of (\ref{eq:Hilbert-scheme}) parameterizing smooth curves. 
Notice however that the curves having nodal singularities at some points of $E_{i,j,j',\ldots ,j^{(m_i-1)}}$ are 
also of interest, for they correspond via Theorem \ref{thm:parabolicBNR} to non-regular residues 
$\Lambda_{i,j,j',\ldots ,j^{(m_i-1)}}$ of the Higgs field. 

Next, let us consider the compactified Picard variety \cite{Alt-Kle} corresponding to the family (\ref{eq:Hilbert-scheme}) 
\begin{equation}\label{eq:Picard}
   \Pic_{\rel}(\widetilde{Z},H,d) \to \Hilb(\widetilde{Z},H)
\end{equation}
parameterizing torsion-free coherent sheaves of rank $1$ and degree $d$ on the fibers of (\ref{eq:Hilbert-scheme}). 
It follows from \cite{Don-Mar} that the restriction 
$$
   \Pic_{\rel}^0(\widetilde{Z},H,d) \to B
$$
of (\ref{eq:Picard}) to the space $B$ parameterizing smooth connected curves carries a canonical holomorphic Poisson structure 
with Poisson bivector field denoted by $\Pi_{\Pic^0}$. 
Let us describe explicitly $\Pi_{\Pic^0}$: for this purpose, notice that the Zariski tangent space of 
$\Pic_{\rel}^0(\widetilde{Z},d,H)$ at a given sheaf $S$ is given by $\GlobExt_{\O_{\widetilde{Z}}}^1(S,S)$. 
By Grothendieck-duality, the Zariski cotangent space is then isomorphic to 
$$
  \GlobExt_{\O_{\widetilde{Z}}}^1(S,K_{\widetilde{Z}}\otimes S) \cong  \GlobExt_{\O_{\widetilde{Z}}}^1(K_{\widetilde{Z}}^{\vee}\otimes S,S).  
$$
Thus the Yoneda product 
$$
  \GlobExt_{\O_{\widetilde{Z}}}^1(K_{\widetilde{Z}}^{\vee}\otimes S,S) \times \GlobExt_{\O_{\widetilde{Z}}}^1(S,K_{\widetilde{Z}}\otimes S) 
  \to \GlobExt_{\O_{\widetilde{Z}}}^2(K_{\widetilde{Z}}^{\vee}\otimes S,K_{\widetilde{Z}}\otimes S) 
$$
induces by duality an alternating map 
\begin{equation}\label{eq:dual-Yoneda}
  \GlobExt_{\O_{\widetilde{Z}}}^2(K_{\widetilde{Z}}^{\vee}\otimes S,K_{\widetilde{Z}}\otimes S)^{\vee} \to 
  \GlobExt_{\O_{\widetilde{Z}}}^1(S,S) \times \GlobExt_{\O_{\widetilde{Z}}}^1(S,S). 
\end{equation}
Denote the support of $S$ by $\widetilde{\Sigma}$; by assumption, this is a smooth connected curve. 
\begin{clm}\label{clm:Poisson}
 The space 
 $$
  \GlobExt_{\O_{\widetilde{Z}}}^2(K_{\widetilde{Z}}^{\vee}\otimes S,K_{\widetilde{Z}}\otimes S)^{\vee} 
 $$
 admits a subspace isomorphic to $H^0(\widetilde{\Sigma}, \C)$.
\end{clm}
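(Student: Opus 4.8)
The plan is to use Grothendieck--Serre duality on the smooth projective surface $\widetilde{Z}$ in order to turn the rather opaque dual $\GlobExt$-space into a space of sections of a line bundle on the spectral curve, and then to exhibit the restricted Poisson bivector as a distinguished nonzero element spanning the desired subspace. Since $\widetilde{\Sigma}$ is smooth and connected, note that $H^0(\widetilde{\Sigma},\C)\cong\C$, so it suffices to produce a canonical one-dimensional subspace.

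First I would invoke Serre duality on $\widetilde{Z}$, which for coherent sheaves $A,B$ reads $\GlobExt^i_{\O_{\widetilde{Z}}}(A,B)^{\vee}\cong \GlobExt^{2-i}_{\O_{\widetilde{Z}}}(B,A\otimes K_{\widetilde{Z}})$. Taking $A=K_{\widetilde{Z}}^{\vee}\otimes S$, $B=K_{\widetilde{Z}}\otimes S$ and $i=2$, and cancelling $K_{\widetilde{Z}}^{\vee}\otimes K_{\widetilde{Z}}=\O_{\widetilde{Z}}$, this yields
$$
  \GlobExt^2_{\O_{\widetilde{Z}}}(K_{\widetilde{Z}}^{\vee}\otimes S, K_{\widetilde{Z}}\otimes S)^{\vee}
    \cong \GlobExt^0_{\O_{\widetilde{Z}}}(K_{\widetilde{Z}}\otimes S, S).
$$
Next, as $\widetilde{\Sigma}$ is smooth the torsion-free rank-one sheaf $S$ is a line bundle on $\widetilde{\Sigma}$; writing $\iota\colon\widetilde{\Sigma}\hookrightarrow\widetilde{Z}$ for the inclusion and using that $\iota_*$ is fully faithful together with the projection formula, I would identify
$$
  \GlobExt^0_{\O_{\widetilde{Z}}}(K_{\widetilde{Z}}\otimes S, S)
    \cong H^0(\widetilde{\Sigma}, \iota^*K_{\widetilde{Z}}^{\vee}).
$$
Thus the whole problem reduces to producing a canonical nonzero section of the restricted anticanonical bundle $\iota^*K_{\widetilde{Z}}^{\vee}$ on $\widetilde{\Sigma}$, together with the map it induces from $H^0(\widetilde{\Sigma},\C)$.

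Here the Poisson geometry enters. A holomorphic Poisson structure on the surface $\widetilde{Z}$ is the same datum as a global section $\Pi$ of the anticanonical bundle $K_{\widetilde{Z}}^{\vee}\cong\O_{\widetilde{Z}}(D_{\infty})$, whose zero divisor is precisely the degeneracy divisor $D_{\infty}$ computed in (\ref{eq:degeneracy-divisor}). Restricting gives $\Pi|_{\widetilde{\Sigma}}\in H^0(\widetilde{\Sigma},\iota^*K_{\widetilde{Z}}^{\vee})$, and multiplication by this section defines the desired map
$$
  H^0(\widetilde{\Sigma},\C)\longrightarrow H^0(\widetilde{\Sigma},\iota^*K_{\widetilde{Z}}^{\vee}),
  \qquad c\longmapsto c\,\Pi|_{\widetilde{\Sigma}},
$$
where I use that $H^0(\widetilde{\Sigma},\C)=H^0(\widetilde{\Sigma},\O_{\widetilde{\Sigma}})=\C$ for the connected curve $\widetilde{\Sigma}$. (This is also the distinguished class which, fed into the dual Yoneda pairing (\ref{eq:dual-Yoneda}), recovers $\Pi_{\Pic^0}$, explaining the choice.)

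The main obstacle, and the only nonformal point, is to verify that $\Pi|_{\widetilde{\Sigma}}$ does not vanish identically, i.e. that $\widetilde{\Sigma}$ is not contained in $D_{\infty}$; this is exactly what makes the above map injective. I would settle it using properties (\ref{prop:parabolicBNR21})--(\ref{prop:parabolicBNR25}): the curve $\widetilde{\Sigma}$ is irreducible and generically $r$ to $1$ over $C$, so it is neither a fibre nor the infinity section; it is disjoint from $(\xi=0)$ and from the proper transforms of the earlier exceptional divisors; and it meets each final exceptional divisor $E_{i,j,j',\ldots,j^{(m_i-1)}}$ only in the finite set described in (\ref{prop:parabolicBNR24,5}). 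Comparing with the explicit list of components of $D_{\infty}$ in (\ref{eq:degeneracy-divisor}), the irreducible curve $\widetilde{\Sigma}$ agrees with none of them, whence $\widetilde{\Sigma}\not\subset D_{\infty}$ and $\Pi|_{\widetilde{\Sigma}}\neq 0$. Consequently the map above is injective and its image is a subspace isomorphic to $H^0(\widetilde{\Sigma},\C)$, as claimed.
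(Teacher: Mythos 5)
Your proof is correct, and it reaches the same intermediate object as the paper --- the space $H^0(\widetilde{\Sigma},\iota^*K_{\widetilde{Z}}^{\vee})$ with the restricted Poisson bivector as its distinguished element --- but by a genuinely different dévissage. The paper (following Lemma 5.3 of the cited Plancherel paper) computes $\GlobExt^2_{\O_{\widetilde{Z}}}(K_{\widetilde{Z}}^{\vee}\otimes S,K_{\widetilde{Z}}\otimes S)$ via the local-to-global spectral sequence for sheaves pushed forward from the smooth curve $\widetilde{\Sigma}$, which reduces it to $H^1(\widetilde{\Sigma},\cdot)$ of a line bundle involving the normal bundle $N_{\widetilde{\Sigma}|\widetilde{Z}}$, then applies Serre duality \emph{on the curve} together with the adjunction-type identification of $N_{\widetilde{\Sigma}|\widetilde{Z}}$ supplied by the Poisson structure. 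You instead apply Grothendieck--Serre duality \emph{on the surface} once, cancel the canonical twists, and use full faithfulness of $\iota_*$ plus the projection formula to land directly in $\mathrm{Hom}_{\O_{\widetilde{\Sigma}}}(\iota^*K_{\widetilde{Z}}\otimes\L,\L)\cong H^0(\widetilde{\Sigma},\iota^*K_{\widetilde{Z}}^{\vee})$. Your route avoids the spectral sequence entirely and is arguably cleaner; the paper's route has the advantage of exhibiting the normal bundle explicitly, which is what it later needs to identify the symplectic leaves (curves with fixed intersection with $D_{\infty}$). A genuine merit of your write-up is that you make explicit the non-formal point the paper leaves implicit: that the map $c\mapsto c\,\Pi|_{\widetilde{\Sigma}}$ is injective because the irreducible curve $\widetilde{\Sigma}$, being generically $r$ to $1$ over $C$, disjoint from $(\xi=0)$ and meeting the exceptional divisors in finitely many points, is contained in no component of the degeneracy divisor $D_{\infty}$.
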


\begin{proof}
 This is a standard application of the spectral sequence abutting to $\GlobExt_{\O_{\widetilde{Z}}}^2$ and of 
 Serre duality on $\widetilde{\Sigma}$, making use of the identification 
 $$
  N_{\widetilde{\Sigma}|\widetilde{Z}} \cong K_{\widetilde{\Sigma}}(-\widetilde{\Sigma} \cap D_{\infty}) 
 $$
 provided by the Poisson structure of $\widetilde{Z}$. 
 For details see Lemma 5.3 \cite{Sz-plancherel}. 
\end{proof}

The Poisson bivector field $\Pi_{\Pic^0}$ on (\ref{eq:Picard}) is then defined as the image under 
(\ref{eq:dual-Yoneda}) of the canonical generator 
$$
  1\in H^0(\widetilde{\Sigma}, \C) \subset \GlobExt_{\O_{\widetilde{Z}}}^2(K_{\widetilde{Z}}^{\vee}\otimes S,K_{\widetilde{Z}}\otimes S)^{\vee} . 
$$
As it is shown in \cite{Don-Mar}, its symplectic leaves are obtained by fixing the intersection of the support of the sheaf 
with the degeneracy divisor (\ref{eq:degeneracy-divisor}). 
Given that the curves in the Hilbert scheme satisfy conditions (\ref{prop:parabolicBNR21})--(\ref{prop:parabolicBNR25}) 
of Theorem \ref{thm:parabolicBNR}, the symplectic leaves are thus obtained by fixing the intersection of the 
support curve with the exceptional divisors $E_{i,j,j',\ldots ,j^{(m_i-1)}}$.

We will need a generalization of the setup of the previous paragraph to a relative situation. 
Namely, for the parameter space $\C^{r\sum_i (m_i+1)}$ of (\ref{eq:Dolbeault-moduli}) the product 
\begin{equation}\label{eq:CxZ}
   \C^{r\sum_i (m_i+1)} \times Z
\end{equation}
contains a tautological flat reduced subscheme $\mathcal{C}$ of relative dimension $0$ over $\C^{r\sum_i (m_i+1)}$ given by 
$$
  \mathcal{C} = \left( (\zeta_{i,j},\ldots ,\zeta_{i,j,j'\ldots , j^{(m_i)}}, \lambda_{i,j,j'\ldots , j^{(m_i)}})_{i,j,j'\ldots , j^{(m_i)}}, 
  \cup_i \C[[z_i]]/I_{\zeta_{i,j},\ldots ,\zeta_{i,j,j'\ldots , j^{(m_i-1)}}, \lambda_{i,j,j'\ldots , j^{(m_i)}} }
  \right)
$$
where 
$$
  I_{\zeta_{i,j},\ldots ,\zeta_{i,j,j'\ldots , j^{(m_i-1)}}}  = (z_i, \zeta_i - (\zeta_{i,j} + z_i \zeta_{i,j,j'} + 
    \cdots + z_i^{m_i-1} \zeta_{i,j,j'\ldots , j^{(m_i-1)}} + z_i^{m_i} \lambda_{i,j,j'\ldots , j^{(m_i)}}) )
$$
in the local chart of $Z$ over a neighborhood of $p_i$ with coordinates $z_i, \zeta_i$ (\ref{eq:zeta-i}). 
Let us denote by 
$$
  \widetilde{\mathcal{Z}} \to \C^{r\sum_i (m_i+1)} \times Z
$$
the blow-up of $\mathcal{C}$ in (\ref{eq:CxZ}). 
We are interested in the relative Hilbert scheme 
\begin{equation}\label{eq:relative-Hilbert}
   \Hilb_{\rel}(\widetilde{\mathcal{Z}}, \C^{r\sum_i (m_i+1)}, H) 
\end{equation}
of $\widetilde{\mathcal{Z}}$ with respect to $\C^{r\sum_i (m_i+1)}$, 
with Hilbert polynomial on the fibres equal to $H$ given in the previous paragraph. 
It admits a dense open subscheme 
\begin{equation}\label{eq:relative-Hilbert-Zariski}
   \Hilb_{\rel}^0(\widetilde{\mathcal{Z}}, \C^{r\sum_i (m_i+1)}, H) 
\end{equation}
parameterizing smooth connected curves over $\C^{r\sum_i (m_i+1)}$. 
Indeed, let $A\subset \C^{r\sum_i (m_i+1)}$ be the constructible subset consisting of 
nested sequences of eigenvalues for which the eigenvalues $\lambda_{i,j,j'\ldots , j^{(m_i)}}$ 
of the residue at $p_i$ restricted to common eigenspaces of the irregular part are of multiplicity $1$. 
Then, away from $A$ the curves in (\ref{eq:relative-Hilbert}) are smooth over $D_{\red}$, and 
in the fibers of (\ref{eq:relative-Hilbert}) over points of $\C^{r\sum_i (m_i+1)}\setminus A$ further Zariski 
open subsets parametrize curves that are everywhere smooth. 
Notice also that the dimensions $d_{i,j,j',\ldots ,j^{(m_i-1)}}$ may change in a discrete way giving different 
Hilbert polynomials, and this splits up the relative Hilbert scheme into several components. 
In the Dolbeault setup, this corresponds to letting the dimensions of the joint eigenspaces of the matrices in the irregular part vary. 

Finally, we are interested in 
\begin{equation}\label{eq:relative-compactified-Picard}
  \Pic_{\rel}(\widetilde{\mathcal{Z}}, \C^{r\sum_i (m_i+1)}, H,d)
\end{equation}
parameterizing torsion-free sheaves of given degree $d$ on the curves in (\ref{eq:relative-Hilbert}). 
It admits a dense open subscheme 
\begin{equation}\label{eq:relative-Picard}
  \Pic_{\rel}^0(\widetilde{\mathcal{Z}}, \C^{r\sum_i (m_i+1)}, H,d) \to  \Hilb_{\rel}^0(\widetilde{\mathcal{Z}}, \C^{r\sum_i (m_i+1)}, H) 
\end{equation}
parameterizing line bundles supported on a family of smooth connected curves over $\C^{r\sum_i (m_i+1)}$.

The Poisson bivector field $\Pi_{\Pic^0}$ on (\ref{eq:Picard}) described after Claim \ref{clm:Poisson} admits a 
straightforward extension to (\ref{eq:relative-Picard}) for which the natural map to $\C^{r\sum_i (m_i+1)}$ 
consists of Casimir operators, simply by pushing forward bivectors. Namely, the inclusion 
$$
  \widetilde{Z} \hookrightarrow \widetilde{\mathcal{Z}}
$$ 
gives rise to a map of cotangent bundles 
$$
  T^* \Pic_{\rel}^0(\widetilde{\mathcal{Z}}, \C^{r\sum_i (m_i+1)}, H,d) \to T^* \Pic_{\rel}^0(\widetilde{Z},H,d)
$$
and the Poisson structure on (\ref{eq:relative-Picard}) is then defined as the composition 
\begin{align*}
  T^* \Pic_{\rel}^0(\widetilde{\mathcal{Z}}, \C^{r\sum_i (m_i+1)}, H,d) & \wedge 
  T^* \Pic_{\rel}^0(\widetilde{\mathcal{Z}}, \C^{r\sum_i (m_i+1)}, H,d) \to  \\
  \to T^* \Pic_{\rel}^0(\widetilde{Z},H,d) & \wedge T^* \Pic_{\rel}^0(\widetilde{Z},H,d) \xrightarrow{\Pi_{\Pic^0}} \C. 
\end{align*}

Let us define 
$$
  \Mod_{\Dol}^{\irr,0} \subseteq \Mod_{\Dol}^{\irr}
$$
to consist of equivalence classes of Higgs bundles with smooth connected spectral curve $\widetilde{\Sigma}$, 
unramified over $D_{\red}$. With this notation the following result holds. 

\begin{thm}\label{thm:Poisson-isomorphism}
 The equivalence of categories of Theorem \ref{thm:parabolicBNR} induces a Poisson isomorphism between dense 
 open subets of the spaces (\ref{eq:Dolbeault-moduli}) and (\ref{eq:relative-compactified-Picard}): 
 \begin{align*}
  (\Mod_{\Dol}^{\irr,0},\Pi_{\Dol}) & \cong (\Pic_{\rel}^0(\widetilde{\mathcal{Z}}, \C^{r\sum_i (m_i+1)},H,d), \Pi_{\Pic}), \\
  d & = \delta + \frac{r(r-1)}2 \deg(L)
 \end{align*}
 with $\delta$ defined in Remark \ref{rk:dR-Dol-moduli}. 
\end{thm}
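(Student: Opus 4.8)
The plan is to follow the proof of Proposition 5.1 of \cite{Sz-plancherel}, matching the two Poisson bivectors ingredient by ingredient across the spectral correspondence of Theorem \ref{thm:parabolicBNR}. Throughout write $\pi = p\circ\tilde{\sigma}\colon\widetilde{Z}\to C$ for the composite projection. On the locus $\Mod_{\Dol}^{\irr,0}$ the spectral curve $\widetilde{\Sigma}$ is smooth and connected and the spectral sheaf $S=S_{\E_{\bullet}}$ restricts to a line bundle on it, so Theorem \ref{thm:parabolicBNR} already identifies $\Mod_{\Dol}^{\irr,0}$ with the smooth-curve locus of $\Pic_{\rel}^0(\widetilde{\mathcal Z},\C^{r\sum_i(m_i+1)},H,d)$ as sets; only the Poisson compatibility and the value of $d$ remain. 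The degree is a standard Beauville--Narasimhan--Ramanan bookkeeping: since $\pi_* S$ recovers $\E$ up to the fixed twist of Proposition \ref{prop:parabolicBNR-logarithmic} and its irregular extension, while $\deg\pi_*\O_{\widetilde{\Sigma}} = -\tfrac{r(r-1)}2\deg L$, the degree formula of \cite{bnr} together with $\deg\E=\delta$ from Remark \ref{rk:dR-Dol-moduli} yields $d=\delta+\tfrac{r(r-1)}2\deg L$.

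Next I would identify the (co)tangent spaces. The essential input is the spectral-correspondence quasi-isomorphism
$$
  R\pi_* R\mathcal{H}om_{\O_{\widetilde{Z}}}(S,S) \;\simeq\; \bigl(\End(\E)\xrightarrow{\ad_{\theta}}\End(\E)\otimes L\bigr),
$$
realizing the Higgs deformation complex \eqref{eq:deformation-complex} as the derived pushforward of the endomorphism complex of the spectral sheaf. Its verification is local over $C$ and follows from the explicit normal forms of Section \ref{sec:correspondence-irregular}: on each chart $U'_{i,j,j',\ldots,j^{(m_i-1)}}$ the sheaf $S$ is the cokernel of a companion-type map built from $\tilde{\theta}$, so $\End(S)$ pushes forward to $\End(\E)$ with $\ad_{\theta}$ as differential, exactly as in the classical situation \cite{bnr}; the parabolic weights $\alpha_i^l$ play no role since they are held fixed along the deformation. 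As $S$ is a line bundle on the smooth curve $\widetilde{\Sigma}$ and $\pi$ is finite there, taking hypercohomology gives a canonical isomorphism
$$
  \GlobExt^1_{\O_{\widetilde{Z}}}(S,S)\;\cong\;\H^1\bigl(\End(\E)\xrightarrow{\ad_{\theta}}\End(\E)\otimes L\bigr),
$$
and dually the cotangent spaces of the two moduli are identified.

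It then remains to match the three data defining each bivector. First, functoriality of the isomorphism above turns the cup product on $\H^\bullet$ into the Yoneda product on $\GlobExt^\bullet$, so the degree-$2$ classes correspond. Second, the trace form \eqref{eq:Killing-form} $B(\varphi,\psi)=\tr(\varphi\psi)$ corresponds to the composition-and-trace map on $R\mathcal{H}om(S,S)$, which on $\widetilde{\Sigma}$ is the natural trace $\End(S)\to\O_{\widetilde{\Sigma}}$. Third, the canonical sections must be matched: the section $1\in H^0(C,\O(D))$ used in \eqref{eq:H0SES} to define $\Pi_{\Dol}$ pulls back along $\pi|_{\widetilde{\Sigma}}$ to the constant section $1\in H^0(\widetilde{\Sigma},\C)$ of Claim \ref{clm:Poisson} defining $\Pi_{\Pic^0}$. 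Here the adjunction $N_{\widetilde{\Sigma}|\widetilde{Z}}\cong K_{\widetilde{\Sigma}}(-\widetilde{\Sigma}\cap D_{\infty})$ furnished by the Poisson structure of $\widetilde{Z}$ lets Serre duality on $\widetilde{\Sigma}$ reproduce, after pushforward, the pairing $H^1(C,K_C(-D))\cong H^0(C,\O(D))^{\vee}$ of \eqref{eq:H1K}, because $\pi_*$ carries the relevant part of the degeneracy divisor $D_{\infty}$ of \eqref{eq:degeneracy-divisor} back to exactly $D$. Combining the three matchings shows $\Pi_{\Dol}$ and $\Pi_{\Pic}$ agree; the relative statement follows since on both sides the map to $\C^{r\sum_i(m_i+1)}$ consists of Casimirs, namely the eigenvalue data cutting out the summands $\C_{p_i}^{m_i+1}$ of \eqref{eq:H0SES} on the Dolbeault side and the fixed intersections $\widetilde{\Sigma}\cap E_{i,j,j',\ldots,j^{(m_i-1)}}$ labelling the symplectic leaves of \cite{Don-Mar} on the Picard side.

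The main obstacle is the third matching: controlling how the iterated blow-ups of Construction \ref{constr:blowup} transport the pole divisor $D$ to the degeneracy divisor $D_{\infty}$, and verifying that the twisted adjunction $N_{\widetilde{\Sigma}|\widetilde{Z}}\cong K_{\widetilde{\Sigma}}(-\widetilde{\Sigma}\cap D_{\infty})$ intertwines the trace pairing on $C$ with Serre duality on $\widetilde{\Sigma}$. Once the local computations of Section \ref{sec:correspondence-irregular} and the degeneracy-divisor formula \eqref{eq:degeneracy-divisor} are in hand this reduces to divisor bookkeeping, but it is precisely where all the irregular and parabolic subtlety is concentrated, since it is the only place the exceptional divisors $E_{i,j,j',\ldots,j^{(m_i-1)}}$ and the multiplicities $d_{i,j,j',\ldots,j^{(m_i-1)}}$ enter the comparison of the two symplectic forms.
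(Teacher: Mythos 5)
Your proposal follows essentially the same route as the paper: the paper's proof consists of the set-theoretic identification via Theorem \ref{thm:parabolicBNR}, a citation of the key observation (24) of Proposition 5.1 of \cite{Sz-plancherel} for the matching of the symplectic forms on the leaves, and the same degree computation from $\pi_*\O_{\widetilde{\Sigma}} = \O_C\oplus L^{-1}\oplus\cdots\oplus L^{1-r}$, so the ingredients you spell out (the deformation complex as $R\pi_*R\mathcal{H}om(S,S)$, cup versus Yoneda product, the trace form, and the matching of the two sections $1$) are exactly the content of that citation. The one point you should make explicit is the paper's opening observation that on $\Mod_{\Dol}^{\irr,0}$ the spectral curve is unramified over $D_{\red}$, hence the residue eigenspaces are $1$-dimensional and the parabolic filtration is vacuous --- this is what lets the equivalence of Theorem \ref{thm:parabolicBNR}, whose target is a category of $\R$-parabolic sheaves, land in the plain (non-parabolic) relative Picard variety.
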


\begin{proof}
 First let us observe that by definition for $(\E,\theta) \in \Mod_{\Dol}^{\irr,0}$ the eigenspaces 
 of the residue of $\theta$ are $1$-dimensional. Therefore, the choice of a parabolic filtration on the 
 eigenspaces is vacuous, so Theorem \ref{thm:parabolicBNR} indeed identifies $\Mod_{\Dol}^{\irr,0}$ 
 with (\ref{eq:relative-Picard}). 
 
 It is now sufficient to show that the symplectic structures on the symplectic leaves get identified. 
 This is precisely the content of the key observation (24) of Proposition 5.1 of \cite{Sz-plancherel} 
 (c.f. also \cite{Ha-Hu} Proposition 2.30 in the case of the holomorphically trivial vector bundle 
 over the projective line). 
 The formula for $d$ follows directly from the well-known fact that the direct image of $\O_{\Sigma}$ is equal to 
 $$
  \O_C \oplus L^{-1} \oplus \cdots \oplus L^{1-r}. 
 $$
\end{proof}

\section{Acknowledgments}
This paper grew out of work carried out under the support of the Advanced Grant 
``Arithmetic and physics of Higgs moduli spaces'' no. 320593 of the European Research Council 
and the Lend\"ulet ``Low Dimensional Topology'' program of the Hungarian Academy of Sciences. 
The author would like to thank the \'Ecole Polytechnique F\'ed\'erale de Lausanne and 
the Alfr\'ed R\'enyi Institute of Mathematics for their hospitality.

\bibliography{BNR}
\bibliographystyle{plain}

\end{document}